\newcommand{\inter}{\mathrm{int}}
\newcommand{\conv}{\mathrm{conv}}
\newcommand{\per}{\mathrm{per}}
\newcommand{\area}{\mathrm{area}}
\newcommand{\vol}{\mathrm{vol}}
\newtheorem{thm}{Theorem}[section]
\newtheorem{lem}[thm]{Lemma}
\newtheorem{cor}[thm]{Corollary}
\newtheorem*{KPC}{Kneser--Poulsen Conjecture}
\newtheorem*{DKPC}{Dual Kneser--Poulsen Conjecture}
\newtheorem*{claim}{Claim}
\theoremstyle{definition}
\newtheorem{defin}[thm]{Definition}
\DeclareFontFamily{U}{tipa}{}
\DeclareFontShape{U}{tipa}{m}{n}{<->tipa10}{}
\newcommand{\arc@char}{{\usefont{U}{tipa}{m}{n}\symbol{62}}}%
\newcommand{\arc}[1]{\mathpalette\arc@arc{#1}}
\newcommand{\arc@arc}[2]{%
  \sbox0{$\m@th#1#2$}%
  \vbox{
    \hbox{\resizebox{\wd0}{\height}{\arc@char}}
    \nointerlineskip
    \box0
  }%
}
\title[Two Kneser--Poulsen-type Inequalities]{Two Kneser--Poulsen-type Inequalities in Planes of Constant Curvature}
\begin{document}
\author[B. Csik\'os]{Bal\'azs Csik\'os}
\address{B. Csik\'os, Dept.\ of Geometry, E\"otv\"os Lor\'and University, Budapest, P\'azm\'any P. stny. 1/C, Hungary, H-1117}
\email{csikos@cs.elte.hu}

\author[M. Horv\'ath]{M\'arton Horv\'ath}
\address{M. Horv\'ath, Dept.\ of Geometry, Budapest University of Technology and Economics, Budapest, Egry J\'ozsef u. 1, Hungary, H-1111} 
\email{horvathm@math.bme.hu}
\thanks{The research was supported by the National Research Development and Innovation Office (NKFIH) Grant No. OTKA K112703.}
\date{}
\dedicatory{Dedicated to Ted Bisztriczky, Gábor Fejes Tóth, and Endre Makai \break on the occasion of their 70th birthdays.}
\keywords{Kneser--Poulsen conjecture, central set}
\subjclass[2010]{52A10, 52A55, 52A40}

\begin{abstract}
We show that the perimeter of the convex hull of finitely many disks lying in the hyperbolic or Euclidean plane, or in a hemisphere does not increase when the disks are rearranged so that the distances between their centers do not increase. This generalizes the theorem on the monotonicity of the perimeter of the convex hull of a finite set under contractions, proved in the Euclidean plane by V.~N.~Sudakov \cite{Sudakov}, R.~Alexander \cite{Alexander}, V.~Capoyleas and J.~Pach \cite{Pach_Capoyleas}.  We also prove that the area of the intersection of finitely many disks in the hyperbolic plane does not decrease after such a contractive rearrangement. The Euclidean analogue of the latter statement was proved by K.~Bezdek and R.~Connelly \cite{Bezdek_Connelly}. Both theorems are proved by a suitable adaptation of a recently published method of I.~Gorbovickis \cite{Gorbovickis}.
\end{abstract}

\maketitle

\section{Introduction}
A map $f\colon X\to Y$ between two metric spaces $(X,d_X)$ and $(Y,d_Y)$ satisfying the inequality $d_X(p,q)\geq d_Y(f(p),f(q))$ for any pair of points $p,q\in X$ is usually called a \emph{weak contraction}, but we shall call such a map simply a \emph{contraction}. We shall also say that a system of points $(p_1',\dots,p_k')\in Y^k$ is a \emph{contraction of the system of points} $(p_1,\dots,p_k) \in X^k$ if $d_X(p_i,p_j)\geq d_Y(p_i',p_j')$ for all pairs of indices $1\leq i,j\leq k$. 

There are many theorems and conjectures saying that a set is not \emph{smaller} than its contractions if \emph{largeness} is measured by a certain geometric quantity. 

For example, it is clear that the diameter and, for any $s\geq 0$, the $s$-dimensional outer Hausdorff measure of a metric space can not increase when the space is contracted. The question whether the outer Minkowski contents of a compact subset of the Euclidean space $\mathbb E^n$ can increase when the set is contracted led M.~Kneser \cite{Kneser} to the conjecture, proposed also by E.~T.~Poulsen \cite{Poulsen}, that the volume of the $\varepsilon$-neighborhood of a finite set in the Euclidean space $\mathbb E^n$ is not smaller than that of any of its contractions in $\mathbb E^n$ for any $\varepsilon>0$. Based on the known special cases of this conjecture, one can put forward a more general conjecture which we shall also refer to as the 
\begin{KPC} If $M$ is either the hyperbolic space $\mathbb H^n$, or the  Euclidean space $\mathbb E^n$, or the sphere $\mathbb S^n$, and if the system of points $(p_1',\dots,p_k')\in M^k$ is a contraction of the system of points $(p_1,\dots,p_k)\in M^k$, then for any choice of the radii $(r_1,\dots,r_k)$, we have
	\[
	\vol_n\left(\bigcup_{i=1}^kB(p_i,r_i)\right)\geq \vol_n\left(\bigcup_{i=1}^kB(p_i',r_i)\right),
	\]
where $\vol_n$ is the $n$-dimensional volume function in $M$, and $B(p,r)$ denotes the closed ball of radius $r$ in $M$ centered at $p$.
\end{KPC}
The following consequence of the Euclidean Kneser--Poulsen conjecture was proved by V.~N.~Sudakov \cite{Sudakov}, R.~Alexander \cite{Alexander}, V.~Capoyleas and J.~Pach \cite{Pach_Capoyleas}.

\begin{thm} \label{mean_width} The mean width of the convex hull of a finite subset of $\mathbb E^n$ is not less than the mean width of the convex hull of any of its contractions in $\mathbb E^n$. 
\end{thm}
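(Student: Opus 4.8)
The plan is to rewrite the mean width as an integral over the unit sphere and to prove the resulting monotonicity by differentiating along a contracting deformation. Up to a positive constant depending only on $n$, the mean width of a convex body $K\subset\mathbb E^n$ equals $\int_{S^{n-1}}h_K(u)\,\mathrm d\sigma(u)$, where $h_K$ is the support function and $\sigma$ the surface measure on $S^{n-1}$; and for $K=\conv\{p_1,\dots,p_k\}$ one has $h_K(u)=\max_i\langle p_i,u\rangle$. So it suffices to prove that
\[
\Phi(p_1,\dots,p_k):=\int_{S^{n-1}}\max_{1\le i\le k}\langle p_i,u\rangle\,\mathrm d\sigma(u)
\]
does not increase when $(p_1,\dots,p_k)$ is replaced by a contraction $(p_1',\dots,p_k')$. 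One cannot argue direction by direction: the hypothesis gives no bound on $\max_i\langle p_i',u\rangle-\min_i\langle p_i',u\rangle$ for an individual $u$, so all directions must be used simultaneously.

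Next I would reduce to an infinitesimal statement. A contraction of a $k$-point configuration of $\mathbb E^n$ need not be joinable to the identity configuration by a path of configurations in $\mathbb E^n$ with non-increasing mutual distances, but such a path does exist in $\mathbb E^{2n}$ by the argument of Bezdek and Connelly \cite{Bezdek_Connelly}; and since $\Phi$ is, up to a constant depending only on $n$, the first intrinsic volume $V_1(\conv\{p_i\})$, which is unchanged by isometric embeddings into higher-dimensional Euclidean spaces, it is enough to show $\tfrac{\mathrm d}{\mathrm dt}\Phi\le0$ along any smooth motion $t\mapsto(p_1(t),\dots,p_k(t))$ in $\mathbb E^{2n}$ satisfying $\langle\dot p_i-\dot p_j,\,p_i-p_j\rangle\le0$ for all $i,j$. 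For $\sigma$-almost every $u$ the maximum in the integrand is attained at a unique index, and the set of directions where index $i$ wins is $N_i\cap S^{2n-1}$, $N_i$ being the normal cone of the vertex $p_i$ of $\conv\{p_i(t)\}$; since $t\mapsto\Phi(p_1(t),\dots,p_k(t))$ is locally Lipschitz, differentiating under the integral sign gives, for almost every $t$,
\[
\frac{\mathrm d}{\mathrm dt}\Phi=\sum_{i=1}^k\langle\dot p_i,w_i\rangle,\qquad w_i:=\int_{N_i\cap S^{2n-1}}u\,\mathrm d\sigma(u).
\]

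It remains to determine the sign. First I would show $w_i\in\operatorname{cone}\{p_i-p_j:j\ne i\}$. Because $N_i$ is defined by the inequalities $\langle u,p_i-p_j\rangle\ge0$, this cone is precisely the dual cone $N_i^{\ast}$, so it is enough to prove that $\int_{N\cap S^{2n-1}}\langle u,v\rangle\,\mathrm d\sigma(u)\ge0$ for every closed convex cone $N$ and every $v\in N$; this is elementary: if $u\in N$ has $\langle u,v\rangle<0$, its reflection $u-2\|v\|^{-2}\langle u,v\rangle v=u+(\text{positive})\cdot v$ lies again in $N$ (a convex cone containing $u$ and $v$) with positive inner product with $v$, so the measure-preserving reflection in $v^{\perp}$ maps the ``negative part'' of the domain into the ``positive part''. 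Writing $w_i=\sum_{j\ne i}c_{ij}(p_i-p_j)$ with $c_{ij}\ge0$, if one can take the coefficients symmetric, $c_{ij}=c_{ji}$, then
\[
\frac{\mathrm d}{\mathrm dt}\Phi=\sum_{i<j}c_{ij}\langle\dot p_i-\dot p_j,\,p_i-p_j\rangle=\tfrac12\sum_{i<j}c_{ij}\frac{\mathrm d}{\mathrm dt}\|p_i-p_j\|^2\le0 ,
\]
and integration in $t$ finishes the proof. The crux is exactly the symmetry $c_{ij}=c_{ji}$: the cone membership produces coefficients one vertex at a time but not in a symmetric way, and obtaining a symmetric system — equivalently, verifying that $(w_1,\dots,w_k)$ lies in the cone spanned by the ``dipole'' fields $(0,\dots,p_i-p_j,\dots,p_j-p_i,\dots,0)$, which is the polar of the cone of admissible velocity fields — is where the combinatorial structure of the normal fan must be exploited, along the lines of Capoyleas--Pach \cite{Pach_Capoyleas} or by the method of Gorbovickis \cite{Gorbovickis}.
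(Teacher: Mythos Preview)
The paper does not prove Theorem~\ref{mean_width}; it is quoted as a known result of Sudakov, Alexander, and Capoyleas--Pach, and only its two--dimensional corollary is re-proved (and generalized) in Section~\ref{sec:2} via the central-set method of Gorbovickis. So there is no ``paper's own proof'' to compare with, but your outline is essentially Alexander's original approach, with the Bezdek--Connelly continuous motion in $\mathbb E^{2n}$ and the invariance of $V_1$ under isometric embeddings supplying the reduction to the infinitesimal case. That reduction and your computation of $\frac{\mathrm d}{\mathrm dt}\Phi=\sum_i\langle\dot p_i,w_i\rangle$ are correct.

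Where you stop short --- the symmetry $c_{ij}=c_{ji}$ --- is in fact not the deep combinatorial step you make it out to be; it drops out of the divergence theorem, and your detour through the reflection argument and the dual cone $N_i^{\ast}$ is not needed. Apply the divergence theorem to a constant vector field on the solid sector $N_i\cap B^{2n}$. Its boundary has two kinds of pieces: the spherical cap $N_i\cap S^{2n-1}$, with outward normal $u$, and the flat facets $N_i\cap N_j\cap B^{2n}$ shared with the neighbouring cones, on which the outward normal is $-\dfrac{p_i-p_j}{\|p_i-p_j\|}$. Since the flux of a constant field through a closed surface vanishes, one obtains
\[
w_i=\int_{N_i\cap S^{2n-1}}u\,\mathrm d\sigma(u)=\sum_{j\ne i}\frac{\mathcal H^{2n-1}\big(N_i\cap N_j\cap B^{2n}\big)}{\|p_i-p_j\|}\,(p_i-p_j),
\]
so that $c_{ij}=\mathcal H^{2n-1}\big(N_i\cap N_j\cap B^{2n}\big)/\|p_i-p_j\|\ge0$ is manifestly symmetric in $i,j$. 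Plugging this into your formula gives
\[
\frac{\mathrm d}{\mathrm dt}\Phi=\sum_{i<j}\frac{\mathcal H^{2n-1}\big(N_i\cap N_j\cap B^{2n}\big)}{\|p_i-p_j\|}\,\langle\dot p_i-\dot p_j,\,p_i-p_j\rangle\le0,
\]
and integrating in $t$ finishes the argument. (For generic $t$ the polytope $\conv\{p_i(t)\}$ is simplicial and the facets $N_i\cap N_j$ are genuine $(2n-1)$--cells; the null set of exceptional times does not affect the integral.) With this one paragraph added, your proposal is a complete proof.
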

The perimeter of a compact convex set in $\mathbb E^2$ is just $\pi$ times its mean width, so Theorem \ref{mean_width} can be rephrased in the 2-dimensional case as follows.
\begin{cor}\label{per_conv}	If $X$ is a finite subset of $\mathbb E^2$, and $X'\subset \mathbb E^2$ is a contraction of $X$, then
	\[\per\left(\conv\left(X\right)\right)\geq\per\left(\conv\left(X'\right)\right),\]
where $\per(A)$ and $\conv(A)$ denote the perimeter and the convex hull of the set $A\subset \mathbb E^2$, respectively.
\end{cor}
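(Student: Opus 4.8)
The plan is to read Corollary~\ref{per_conv} off Theorem~\ref{mean_width}, using nothing more than the classical relation between the perimeter and the mean width of a planar convex body. Recall that for a compact convex set $K\subset\mathbb{E}^n$ with support function $h_K(u)=\max_{x\in K}\langle x,u\rangle$, the mean width is
\[
\bar w(K)=\frac{2}{|\mathbb{S}^{n-1}|}\int_{\mathbb{S}^{n-1}}h_K(u)\id u,
\]
while Cauchy's formula in the plane gives $\per(K)=\int_{\mathbb{S}^{1}}h_K(u)\id u$. Combining the two in the case $n=2$ yields $\per(K)=\pi\,\bar w(K)$. Setting $K=\conv(X)$ and $K'=\conv(X')$ and applying Theorem~\ref{mean_width} to the contraction $X'\subset\mathbb{E}^2$ of $X$, we get $\bar w(K)\geq\bar w(K')$, hence $\per(K)\geq\per(K')$. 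So the only thing to verify is the normalization identity $\per(K)=\pi\,\bar w(K)$, which is precisely the sentence quoted just before the corollary; the rest is substitution.

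It is worth noting why one cannot even shortcut the underlying Theorem~\ref{mean_width} by a naive direction-by-direction estimate, since this is the obstacle that a self-contained argument must face. Writing $\per(\conv(X))=\int_0^\pi w_X(\theta)\id\theta$, where $w_X(\theta)=\max_{i,j}\langle p_i-p_j,u_\theta\rangle$ is the width of $\conv(X)$ in the direction $u_\theta=(\cos\theta,\sin\theta)$, one would like $w_{X'}(\theta)\leq w_X(\theta)$ for every $\theta$; but this is false in general, because a contraction can enlarge the width in individual directions (shrinking the long axis of a needle-like point set fattens it transversally). The known proofs instead connect $X$ to $X'$ by a one-parameter family of point systems along which every pairwise distance is monotone, differentiate $\per(\conv(\cdot))$ along this motion, and show that the derivative is a linear combination of the numbers $\tfrac{\ud}{\ud t}\|p_i(t)-p_j(t)\|$ with \emph{nonnegative} coefficients supported on the edges of the (generically simple) convex-hull polygon.

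Thus the real difficulty lives one level down, in the proof of Theorem~\ref{mean_width}: establishing the sign of the first variation of the perimeter of a convex hull under a distance-monotone motion. For the present corollary, however, that difficulty has already been absorbed into the hypothesis we are permitted to cite, and the proof reduces to the perimeter--mean-width dictionary together with a single application of Theorem~\ref{mean_width}.
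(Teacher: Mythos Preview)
Your proof is correct and matches the paper's own derivation: the corollary is stated immediately after the sentence ``The perimeter of a compact convex set in $\mathbb E^2$ is just $\pi$ times its mean width,'' and is meant to follow from Theorem~\ref{mean_width} by exactly the substitution you perform. Your additional paragraphs on why a directionwise comparison fails and where the real work lies are accurate commentary but not needed for the deduction itself.
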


We can also formulate the  
\begin{DKPC} If $M\in \{ \mathbb H^n, \mathbb E^n, \mathbb S^n\}$, and if the configuration $(p_1',\dots,p_k')\allowbreak\in M^k$ is a contraction of the configuration $(p_1,\dots,p_k)\in M^k$, then for any choice of the radii $(r_1,\dots,r_k)$, we have
	\[
	\vol_n\left(\bigcap_{i=1}^kB(p_i,r_i)\right)\leq \vol_n\left(\bigcap_{i=1}^kB(p_i',r_i)\right).
	\]
\end{DKPC}
For the sphere $\mathbb S^n$, the Kneser--Poulsen conjecture is equivalent to its dual, since the complement of a closed ball in $\mathbb S^n$ is an open ball, but the two conjectures seem to be independent in the hyperbolic and Euclidean cases. The dual Kneser--Poulsen conjecture can also be thought of as a quantitative version of the Kirszbraun--Valentine theorem. 

\begin{thm}[M.~Kirszbraun, F.~A.~Valentine]\label{Kirszbraun}
If $M\in\{\mathbb H^n, \mathbb E^n,\mathbb S^n\}$, and $X\subseteq M$ is a subset of $M$, $f\colon X\to M$ is a contraction, then there exists a contraction $\tilde f\colon M\to M$ such that $\tilde f|_X=f$.
\end{thm}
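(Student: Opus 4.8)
The plan is to reduce Theorem~\ref{Kirszbraun} to a finite, bounded-dimensional extension problem and to settle the latter by a minimal-enclosing-ball argument. Order the partial contractions $g\colon Y\to M$ with $X\subseteq Y\subseteq M$ and $g|_X=f$ by extension; the union of a chain of such maps is again a contraction, so Zorn's lemma yields a maximal one. If one can always extend a contraction by a single point, maximality forces its domain to be $M$, and we are done. So fix $f\colon X\to M$ and $p\in M\setminus X$: extending $f$ to $p$ amounts to choosing a point of $\bigcap_{x\in X}B\big(f(x),d(p,x)\big)$, i.e.\ to showing that this intersection of closed balls is nonempty. The balls are compact, so by the finite intersection property it suffices to treat finite $X$; and since the balls are convex (for $\mathbb{S}^n$ one first restricts to an open hemisphere, where balls are convex and Helly's theorem holds), Helly's theorem in $M$ reduces the problem to $|X|\le n+1$.

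It remains to prove: if $p_0,\dots,p_m\in M$ with $m\le n+1$, and $q_1,\dots,q_m\in M$ satisfy $d(q_i,q_j)\le d(p_i,p_j)$ for all $i,j$, then some $q_0$ satisfies $d(q_0,q_i)\le r_i:=d(p_0,p_i)$ for all $i$. (If some $r_i=0$ then $p_0=p_i$ and $q_0:=q_i$ works, so assume all $r_i>0$.) For $\lambda\ge 0$ consider the compact convex sets $B(q_i,\lambda r_i)$ and let $\lambda^\ast$ be the infimum of those $\lambda$ for which $\bigcap_i B(q_i,\lambda r_i)\ne\emptyset$; this infimum is attained, and since the squared distance functions are strictly convex along geodesics the minimal intersection is a single point $q^\ast$. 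If $\lambda^\ast\le 1$ we are finished, so assume $\lambda^\ast>1$ and let $I=\{\,i:d(q^\ast,q_i)=\lambda^\ast r_i\,\}$ be the active indices. Minimality of $\lambda^\ast$ says that the convex function $q\mapsto\max_i d(q,q_i)/r_i$ attains its minimum at $q^\ast$, so $0$ lies in its subdifferential there; unwinding this, $0$ is a positive convex combination of the unit vectors at $q^\ast$ pointing towards the $q_i$ with $i\in I$, so $q^\ast$ lies in the relative interior of $\conv\{q_i:i\in I\}$.

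Write $q^\ast=\sum_{i\in I}t_i q_i$ with $t_i>0$, $\sum_{i\in I}t_i=1$, and put $p^\ast=\sum_{i\in I}t_i p_i$. For $M=\mathbb{E}^n$ the variance identity $\sum_{i\in I}t_i\,d(z,q_i)^2=d(z,q^\ast)^2+\sum_{i\in I}t_i\,d(q^\ast,q_i)^2$, applied also to the $p_i$, combines with the contraction hypothesis to give
\[
(\lambda^\ast)^2\!\sum_{i\in I}t_i r_i^2=\sum_{i\in I}t_i\,d(q^\ast,q_i)^2=\tfrac12\!\!\sum_{i,j\in I}\!\! t_i t_j\,d(q_i,q_j)^2\le\tfrac12\!\!\sum_{i,j\in I}\!\! t_i t_j\,d(p_i,p_j)^2=\sum_{i\in I}t_i\,d(p^\ast,p_i)^2\le\sum_{i\in I}t_i\,d(p_0,p_i)^2=\sum_{i\in I}t_i r_i^2,
\]
whence $\lambda^\ast\le 1$, a contradiction. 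For $M=\mathbb{H}^n$ or $\mathbb{S}^n$ the same scheme works once the Euclidean identity is replaced by the hyperbolic, respectively spherical, law of cosines; the cleanest implementation runs the whole argument inside the Lorentzian (respectively Euclidean) vector model of $M$, where convex combinations of points and the needed (strict) convexity and parallelogram-type relations become statements about the ambient symmetric bilinear form.

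The main obstacle is exactly this last inequality and the convexity facts underlying it in the non-Euclidean setting: one must verify that the barycentric points $q^\ast,p^\ast$ behave correctly and that $\sum_{i\in I}t_i\,d(p^\ast,p_i)^2\le\sum_{i\in I}t_i r_i^2$ persists when Euclidean distances are replaced by their spherical or hyperbolic counterparts and convexity is only geodesic; and in the spherical case one must first tame the global geometry---antipodes and balls of radius $\ge\pi/2$---so that Helly's theorem and the uniqueness of $q^\ast$ remain valid. The remaining ingredients (Zorn's lemma, compactness, Helly's theorem, the first-order minimality argument) are independent of which of the three geometries one is in.
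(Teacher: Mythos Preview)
The paper does not prove Theorem~\ref{Kirszbraun}; it is quoted as a classical result of Kirszbraun and Valentine with references, and the only remark the paper adds is that ``by a compactness argument, the proof reduces to'' the finite statement that $\bigcap_i B(p_i,r_i)\neq\emptyset$ implies $\bigcap_i B(p_i',r_i)\neq\emptyset$. Your Zorn's-lemma/one-point-extension step together with the finite-intersection-property reduction is exactly this reduction, so on that level your outline and the paper's sketch agree. Beyond that there is nothing in the paper to compare against, so the rest of your write-up has to be judged on its own.

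For $M=\mathbb E^n$ your variational argument is essentially Kirszbraun's original one and is correct. Two small cleanups: the Helly step is unnecessary, since the minimizing-$\lambda$ argument already handles arbitrarily many points, and it creates extra trouble on the sphere; and the subdifferential condition gives only $q^\ast\in\conv\{q_i:i\in I\}$, not the \emph{relative interior}, but that weaker conclusion is all the variance identity needs (take any representation $q^\ast=\sum t_i q_i$ with $t_i\ge 0$).

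The genuine gaps are precisely the ones you flag yourself. On $\mathbb S^n$, ``first restrict to an open hemisphere'' is not a move you can make in general: the balls $B(f(x),d(p,x))$ need not lie in any hemisphere, balls of radius $\ge\pi/2$ are not geodesically convex, and Helly can fail, so this step as written does not go through. For both $\mathbb H^n$ and $\mathbb S^n$, transplanting the variance identity into the hyperboloid or round-sphere model is the substantive work, and your text does not carry it out; one has to replace $d^2$ by $\cosh d$ (resp.\ $\cos d$), prove the corresponding convexity/minimizer statements, and then reprove the comparison inequality in that form. That is exactly what Valentine's papers do, and it is not a triviality. So your proposal is a correct proof in the Euclidean case and an honest outline with real missing pieces in the other two.
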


Theorem \ref{Kirszbraun} was proved by M.~Kirszbraun \cite{Kirszbraun} for the Euclidean space and extended to the hyperbolic and spherical spaces by F.~A.~Valentine \cite{Valentine_hyperbolic}, \cite{Valentine_sphere}. By a compactness argument, the proof reduces to the statement that if the configuration $(p_1',\dots,p_k')\in M^k$ is a contraction of the configuration $(p_1,\dots,p_k)\in M^k$, and the intersection $\bigcap_{i=1}^k B(p_i,r_i)$ is not empty, then the intersection $\bigcap_{i=1}^k B(p_i',r_i)$ is also not empty.

Both the Kneser--Poulsen conjecture and its dual are completely proved in $\mathbb E^2$ by K.~Bezdek and R.~Connelly \cite{Bezdek_Connelly}, but all the remaining cases are still open. The cited proofs of Theorem \ref{mean_width} and Corollary \ref{per_conv} also use heavily the Euclidean structure of the ambient space and cannot be transferred to $\mathbb H^n$ or $\mathbb S^n$ in an obvious way.

In a recent paper, I.~Gorbovickis \cite{Gorbovickis} introduced a new method, with the help of which he could extend the result of K.~Bezdek and R.~Connelly to the hyperbolic and spherical planes under some extra assumptions. Namely, he proved the following theorems.

\begin{thm}[I.~Gorbovickis]
	 If $M\in\{\mathbb H^2,\mathbb S^2\}$, and if the configuration $(p_1',\dots,p_k')\in M^k$ is a contraction of the configuration $(p_1,\dots,p_k)\in M^k$, then for any choice of the radii $(r_1,\dots,r_k)$ for which the union $\bigcup_{i=1}^kB(p_i,r_i)$ is simply connected, we have
	 \[
	 \area\left(\bigcup_{i=1}^kB(p_i,r_i)\right)\geq \area\left(\bigcup_{i=1}^kB(p_i',r_i)\right).
	 \]
\end{thm}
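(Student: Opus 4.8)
The plan is to turn the inequality into a first-variation estimate and then to localize it. The engine is the kinematic formula of Csik\'os for the area of a union of balls in a space of constant curvature: if $(p_1(t),\dots,p_k(t))$ is a real-analytic motion in $M$ with the radii $r_1,\dots,r_k$ kept fixed, then, away from the finitely many parameters at which the combinatorial type of the union changes,
\[
\frac{\ud}{\ud t}\,\area\!\left(\bigcup_{i=1}^{k}B\bigl(p_i(t),r_i\bigr)\right)=\sum_{1\le i<j\le k}c_{ij}(t)\,\frac{\ud}{\ud t}\,d\bigl(p_i(t),p_j(t)\bigr),\qquad c_{ij}(t)\ge 0,
\]
the weight $c_{ij}$ being a non-negative geometric quantity supported on the pairs whose bounding circles meet on the boundary of $\bigcup_i B(p_i(t),r_i)$. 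Hence the area of a union of disks does not increase along any analytic deformation during which all the mutual distances are non-increasing, and the theorem would follow immediately if one could join the given configuration to its contraction by such a deformation inside $M$. For $k\le 3$ this is easy, but for $k\ge 4$ there is a genuine obstruction: in $\mathbb E^{2}$ it is circumvented by passing to $\mathbb E^{4}$, where a monotone analytic path always exists, and then pushing the resulting four-dimensional inequality back down to the plane \cite{Bezdek_Connelly}; on $\mathbb H^{2}$ and $\mathbb S^{2}$ no such higher-dimensional model is available.

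Following Gorbovickis \cite{Gorbovickis}, the remedy is to replace the global deformation by a decomposition of the union $U=\bigcup_i B(p_i,r_i)$ governed by its \emph{central set} $C(U)$ --- the set of centres of inscribed disks of $U$ that are maximal under inclusion, equivalently the set of points of $U$ having more than one nearest point on $\partial U$. This is the step that uses the hypothesis that $U$ is simply connected: a planar domain with piecewise-analytic boundary has a central set that is a finite graph, and that graph is a \emph{tree} when the domain is simply connected; for a union of disks its edges lie along the pairwise bisectors $\{x\in M:\ d(x,p_i)-r_i=d(x,p_j)-r_j\}$. (The same Euler characteristic input appears through the Gauss--Bonnet identity $\kappa\,\area(U)=2\pi\chi(U)-\int_{\partial U}k_g\id s-\sum_v\theta_v$ with $\chi(U)=1$, the geodesic curvature along $\partial U$ being constant --- equal to that of a circle of radius $r_i$ --- on each arc cut out by $\partial B_i$.) The foot map, sending a point of $U\setminus C(U)$ to its unique nearest point of $\partial U$, fibres $U$ by geodesic segments and, together with the regions nearest to a vertex of $\partial U$, cuts $U$ into finitely many simply connected ``petals'' organized along $C(U)$: the petal attached to an edge on the $(i,j)$-bisector is bounded by arcs of $\partial B_i$ and of $\partial B_j$ and by geodesic segments, and it collapses to a lens when that edge degenerates to a point.

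With this picture in place the argument would proceed as follows. One first reduces, by perturbation and a limiting argument, to configurations in general position --- transverse crossings, no three bisectors concurrent, no disk contained in the union of the others --- so that $C(U)$ is a genuine tree with non-degenerate edges; here one must check that the two areas depend continuously on the configuration and that the simply connected case is respected, in the appropriate direction, by the approximation. For a generic $U$ one establishes the additivity $\area(U)=\sum_e\area(\mathrm{petal}_e)$ and then analyses a single petal: using the explicit trigonometry of the space form --- equivalently, the two-disk instance of the variation formula above, where the sign of $c_{ij}$ is transparent --- one shows that as the configuration varies the contribution of each petal is controlled monotonically by the distances between the centres that border it, so that the net change of $\area(U)$ along any admissible deformation is non-positive. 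Finally one uses the tree structure of $C(U)$ to realize the prescribed contraction as a finite concatenation of such admissible analytic steps, with $\area(U)$ continuous across the finitely many changes of combinatorial type; since each step decreases the area, so does the concatenation.

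The hard part is precisely the step that trades the rigid, higher-dimensional deformation problem for a local analysis on $C(U)$: proving that the central set of a simply connected union of disks is a tree with bisector edges, arranging the petal decomposition so that it survives small motions of the centres, and then showing that each petal's area depends on the ambient configuration only through the distances of the two or three centres that border it, and does so with the correct sign. A secondary, purely technical, obstacle is the treatment of degenerate configurations --- where $C(U)$ acquires vertices of higher valence or an edge collapses and the decomposition itself degenerates; these must be dispatched by approximation, after checking that either the simply connected case is preserved under the approximation or the area is continuous across the degeneration.
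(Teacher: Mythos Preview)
This theorem is not proved in the present paper; it is quoted in the introduction as a result of Gorbovickis. There is therefore no proof here to compare against directly, but the paper does adapt Gorbovickis's method to prove its own two theorems (on the perimeter of the convex hull of disks and on the area of an intersection of disks), so one can compare your sketch to that method as it is actually deployed.

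You have the right ingredients---the central set $C_U$, its tree structure when $U$ is simply connected, the two-disk case---but you misassemble them. The decisive gap is your final step: you propose to ``use the tree structure of $C(U)$ to realize the prescribed contraction as a finite concatenation of \dots\ admissible analytic steps.'' This reintroduces exactly the obstruction you correctly diagnosed at the outset: there is in general no monotone analytic path in $M$ between the two configurations, and the tree structure of $C_U$ does not manufacture one. Gorbovickis's method never deforms anything. As the paper's own proofs illustrate, the argument is a \emph{direct} comparison by induction on $k$: one first \emph{sharpens} the system (replacing each $D_i$ by a maximal disk in $U$ containing it, and adding to the system the maximal disks centred at the vertices of $C_U$, the contraction being extended to the new centres via Kirszbraun); the subdivided tree $C_U$ then has a leaf $p_k$ with neighbour $p_{k-1}$, one splits $U=U_X\cup U_Y$ with $Y=[p_{k-1},p_k]$ and $U_X\cap U_Y=D_{k-1}$, applies the induction hypothesis to $U_X$ (again a sharpened system, now of $k-1$ disks) and the two-disk case to $U_Y$, and glues the two estimates by inclusion--exclusion. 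Your one-shot petal decomposition $\area(U)=\sum_e\area(\mathrm{petal}_e)$ followed by a per-petal analytic deformation is not the mechanism; and even if each petal's area were monotone in its bordering distances, the petal decomposition of $U'=\bigcup_i D_i'$ need bear no relation to that of $U$, so a direct petal-by-petal comparison of the two endpoints is unavailable as well.
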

\begin{thm}[I.~Gorbovickis]\label{thm:Gorbovickis_metszet}
	If the configuration $(p_1',\dots,p_k')\in (\mathbb S^2)^k$ is a contraction of the configuration $(p_1,\dots,p_k)\in (\mathbb S^2)^k$, then for any choice of the radii $(r_1,\dots,r_k)$ for which the intersection $\bigcap_{i=1}^kB(p_i,r_i)$ is  connected, we have
	\[
	\area\left(\bigcap_{i=1}^kB(p_i,r_i)\right)\leq \area\left(\bigcap_{i=1}^kB(p_i',r_i)\right).
	\]
\end{thm}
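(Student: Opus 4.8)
The plan is to follow the Schläfli-type differentiation scheme. The first ingredient is the area-variation formula for a finite ball intersection (Csikós): if the centers $p_1(t),\dots,p_k(t)$ move smoothly in a surface of constant curvature, then
\[
\frac{\ud}{\ud t}\,\area\!\left(\bigcap_{i=1}^{k} B(p_i(t),r_i)\right)=-\sum_{1\le i<j\le k}\mu_{ij}(t)\,\frac{\ud}{\ud t}\,d\big(p_i(t),p_j(t)\big),
\]
where $\mu_{ij}(t)\ge 0$ is the length of the portion, inside the intersection, of the interface between the regions of influence of $B(p_i,r_i)$ and $B(p_j,r_j)$ (a piece of a weighted farthest-point bisector). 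Consequently, if the starting configuration can be joined to the target configuration by a continuous, piecewise real-analytic path along which every pairwise distance is non-increasing — a \emph{continuous contraction} — then integrating this identity at once yields the claimed inequality. So the problem reduces to producing such a path.

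A continuous contraction need not exist inside $\mathbb{S}^2$, so I would lift the problem to a high-dimensional sphere. Fix $N\ge k-1$ and view $\mathbb{S}^2$ as a totally geodesic subsphere of $\mathbb{S}^N$. A configuration of $k\le N+1$ points of $\mathbb{S}^N$ is determined, up to isometry, by its Gram matrix $G$ (entries $\cos d(p_i,p_j)$), and $G$ ranges over \emph{all} positive semidefinite $k\times k$ matrices with unit diagonal; moreover $(p_i')$ is a contraction of $(p_i)$ precisely when $G'\ge G$ entrywise. Since the positive semidefinite cone is convex, the segment $G_t=(1-t)G+tG'$ stays inside it, and its entries — hence all the distances $d(p_i(t),p_j(t))$ — vary monotonically. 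Its endpoints, having rank $\le 3$, are the two original configurations sitting inside $\mathbb{S}^2$, and $G_t$ lifts to a continuous, piecewise real-analytic configuration path in $\mathbb{S}^N$ joining them, which is all the differential formula needs. Feeding this path into the formula proves the dual Kneser--Poulsen inequality in $\mathbb{S}^N$, for all radii and with no topological restriction.

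It remains to descend from $\mathbb{S}^N$ back to $\mathbb{S}^2$, and this is where I expect the real difficulty to lie. With all centers in $\mathbb{S}^2=\mathbb{S}^N\cap\mathbb{R}^3$, the orthogonal splitting $\mathbb{R}^{N+1}=\mathbb{R}^3\oplus\mathbb{R}^{N-2}$ exhibits $\bigcap_i B(p_i,r_i)$ in $\mathbb{S}^N$ as a fibration over a one-parameter family of rescaled cap intersections in $\mathbb{S}^2$, yielding an identity
\[
\vol_N\!\left(\bigcap_i B(p_i,r_i)\right)=\int_0^1 w_N(\rho)\,\area\!\left(\bigcap_i B\!\left(p_i,\arccos\tfrac{\cos r_i}{\rho}\right)\right)\ud\rho,\qquad w_N>0,
\]
the value $\rho=1$ recovering the set of interest. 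The $\mathbb{S}^N$ inequality then asserts only that a positive-weight average of $\area(\bigcap_i B(p_i',\,\cdot\,))-\area(\bigcap_i B(p_i,\,\cdot\,))$ is non-negative, for every radius vector and every $N$, and a crude deconvolution does not recover the pointwise inequality at $\rho=1$ (letting $N\to\infty$ merely concentrates the weight near $\rho=0$). This forces one to engage the two-dimensional geometry directly, and it is precisely here that the hypothesis that $\bigcap_i B(p_i,r_i)$ is connected must be used: it keeps the cross-sections $\bigcap_i B(p_i,\arccos(\rho^{-1}\cos r_i))$ topologically honest — neither complemented caps nor wild multiply-connected regions — so that their area behaves well under the dilation. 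Completing the argument seems to require a structural analysis of the central set (medial axis) of the connected intersection, along the lines of Gorbovickis's method; by comparison, the differential formula and the Gram-matrix lift are routine.
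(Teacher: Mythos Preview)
The paper does \emph{not} prove this statement: Theorem~\ref{thm:Gorbovickis_metszet} is quoted in the introduction as a result of Gorbovickis and is not reproved. What the paper does prove is the hyperbolic/Euclidean analogue (Theorem~\ref{thm:area_int}), using an adaptation of Gorbovickis's method. So there is no ``paper's own proof'' to compare against, but one can compare your proposal to the Gorbovickis method as it appears in Section~\ref{sec:3}.

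Your proposal is not a proof: by your own admission the descent from $\mathbb S^N$ to $\mathbb S^2$ is left open, and you correctly diagnose why the Bezdek--Connelly mechanism breaks down on the sphere. The integral identity you write expresses the $N$-volume as a weighted average of $2$-areas over a one-parameter family of rescaled radii, and the $\mathbb S^N$ inequality controls only this average, not the value at $\rho=1$. No limit in $N$ repairs this, and invoking connectedness to keep the cross-sections ``topologically honest'' is a hope, not an argument; you end by saying that closing the gap ``seems to require a structural analysis of the central set \dots\ along the lines of Gorbovickis's method'', which concedes the point. So the Csik\'os formula and the Gram-matrix lift are correct and yield the $\mathbb S^N$ statement, but the step that would bring you back to $\mathbb S^2$ is missing, and that is the entire content of the theorem.

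For contrast, Gorbovickis's approach (and the paper's adaptation of it in Section~\ref{sec:3}) uses none of the continuous-motion machinery. One stays in dimension two throughout, replaces each disk by a minimal covering disk of the intersection (or a maximal inscribed disk, on the union side), shows that the \emph{co-central set} of $U=\bigcap_i D_i$ is a finite tree whose edges are geodesic segments, and then runs an induction on $k$: pick a leaf $p_k$ with neighbour $p_{k-1}$, observe that $U_{k-1}=\bigcap_{i<k}D_i$ splits as $U\sqcup(D_{k-1}\setminus D_k)$, apply the induction hypothesis to $U_{k-1}$ and the easy $k=2$ case to $D_{k-1}\cap D_k$, and assemble. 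The connectedness hypothesis on $\mathbb S^2$ is what guarantees the central-set analysis produces a tree rather than a graph with cycles, so it enters structurally, not through any cross-sectional fibration. This is a genuinely different route from yours, and it is the one that actually closes.
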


The main goals of the paper are the following. In Section \ref{sec:2}, we generalize Corollary \ref{per_conv} to point sets lying in the hyperbolic plane or on a hemisphere. Following the ideas of I.~Gorbovickis, we shall use the central set for the proof. To make the inductive proof work, we prove  a stronger statement, claiming that the perimeter of the convex hull of some disks cannot increase when the disks are rearranged by a contraction of the centers. This theorem is new also in the Euclidean plane.

In Section \ref{sec:3}, we prove the dual Kneser--Poulsen conjecture in the hyperbolic plane. The proof of this theorem also pursues the scheme of I.~Gorbovickis, but requires the introduction of a dual version of the central set, which we shall call the co-central set.


\section{The perimeter of the convex hull of finitely many disks \label{sec:2}}

In this section, we prove the following 

\begin{thm}\label{thm:perconv}
If $M\in \{\mathbb H^2,\mathbb E^2, \mathbb S^2\}$,  $B(p_1,r_1),\dots,B(p_k,r_k)$ are closed disks in  $M$ which are contained in a hemisphere in the case of $M=\mathbb S^2$, and $(p_1',\dots,p_k')\in M^k$ is a contraction of $(p_1,\dots,p_k)$, then
\begin{equation}\label{ineq_perconv}
\per\left(\conv\left(\bigcup_{i=1}^kB(p_i,r_i)\right)\right)\geq\per\left(\conv\left(\bigcup_{i=1}^kB(p_i',r_i)\right)\right).
\end{equation}
\end{thm}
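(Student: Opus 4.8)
The plan is to follow Gorbovickis's strategy and reduce the global contraction to a continuous, monotone deformation, using the central set to write the perimeter as an integral that is visibly monotone along the way. First I would recall that, by a classical argument (continuity plus connectedness of the configuration space and the fact that a contraction of $k$ points can be approximated/realized through a path), it suffices to prove the infinitesimal version: if the centers $p_i(t)$ move along smooth curves so that every pairwise distance $d(p_i(t),p_j(t))$ is nonincreasing in $t$, then $t\mapsto \per(\conv(\bigcup_i B(p_i(t),r_i)))$ is nonincreasing. As in Gorbovickis's work one may also assume the $p_i$ are in "general position" during the deformation and pass to the limit at the end. The key object is the \emph{central set} (the cut locus / medial-axis-type set) associated to the convex body $K(t)=\conv(\bigcup_i B(p_i(t),r_i))$: the boundary $\partial K$ consists of circular arcs (pieces of the bounding circles $\partial B(p_i,r_i)$) joined by common tangent segments (in $\mathbb H^2$ or $\mathbb S^2$, tangent geodesic segments), and the perimeter decomposes accordingly.

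Next I would derive a formula expressing $\per(\partial K)$ as a sum over the active disks of an arc-length contribution plus a sum over consecutive pairs $(i,j)$ of the length of the outer common tangent segment, and then differentiate this with respect to $t$. The arc-length terms contribute via the turning angles at the disks; the Gauss--Bonnet theorem on $\partial K$ (which in curvature $\kappa\in\{-1,0,1\}$ reads $\int_{\partial K}\kappa_g + \sum(\text{exterior angles}) + \kappa\cdot\area(K) = 2\pi$, and here $\partial K$ is smooth except possibly nowhere, so there are no angle jumps) lets me trade the curvature-weighted length terms for area and turning-angle data. The upshot, exactly paralleling the Euclidean computation of Capoyleas--Pach but now done intrinsically, should be an expression of the form $\frac{d}{dt}\per(K(t)) = \sum_{\{i,j\}} c_{ij}(t)\,\frac{d}{dt} d(p_i(t),p_j(t))$ with each coefficient $c_{ij}(t)\ge 0$, where the sum runs over pairs whose tangent segment appears on $\partial K$ (and $c_{ij}$ essentially measures how much the tangent direction "opens up"). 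Since each $\frac{d}{dt} d(p_i,p_j)\le 0$ by hypothesis, monotonicity follows; combined with the reduction of the first paragraph this gives \eqref{ineq_perconv}. The hemisphere hypothesis on $\mathbb S^2$ is used to guarantee that the union lies in a convex region so that $\conv$ is well behaved and the tangent-segment/arc decomposition of $\partial K$ is valid, and to keep Gauss--Bonnet's area term under control.

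I expect the main obstacle to be twofold. First, the combinatorial structure of $\partial K$ (which disks are "active", i.e. contribute an arc, and which pairs contribute a tangent segment) changes during the deformation; I would need to check that $\per(K(t))$ is continuous across these combinatorial transitions (a disk being swallowed, two tangency structures exchanging) and that one only needs the derivative formula on the open intervals where the structure is constant — this is the part that genuinely requires the general-position reduction and a careful case analysis, and is where the non-Euclidean setting differs most from \cite{Pach_Capoyleas}. Second, verifying the sign $c_{ij}(t)\ge 0$ requires an honest trigonometric computation in each of the three constant-curvature geometries (using hyperbolic, Euclidean, and spherical trigonometry for the triangle formed by $p_i$, $p_j$, and the relevant tangent point / intersection), and the spherical case must be handled so that the hemisphere restriction is actually invoked. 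Packaging these three cases uniformly — perhaps via a curvature parameter $\kappa$ and the usual $\sin_\kappa,\cos_\kappa$ functions — would be the technical heart of the argument, but the structure of the proof is dictated entirely by the central-set method of \cite{Gorbovickis}.
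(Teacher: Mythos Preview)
Your proposal contains a genuine gap at the very first step. You write that ``by a classical argument \dots\ it suffices to prove the infinitesimal version,'' i.e.\ that any contraction $(p_1,\dots,p_k)\mapsto(p_1',\dots,p_k')$ can be realized by a smooth path $t\mapsto (p_1(t),\dots,p_k(t))$ in $M^k$ along which every pairwise distance is nonincreasing. There is no such classical argument: the existence of such a monotone continuous contraction \emph{in the same space} is not known in general and is in fact the chief obstruction in Kneser--Poulsen-type problems. Bezdek and Connelly's Euclidean proof works precisely because one can always find such a path two dimensions higher, in $\mathbb E^{n+2}$, and then relate $(n+2)$-dimensional volume back to $n$-dimensional quantities; that trick is unavailable here, especially for $\mathbb H^2$ and $\mathbb S^2$. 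So the reduction on which the rest of your outline depends is unjustified, and without it the Csik\'os-type derivative formula you propose (even if you establish $c_{ij}\ge 0$ in all three geometries) does not yield the discrete inequality \eqref{ineq_perconv}.

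Separately, the approach you sketch is not Gorbovickis's method, despite the invocation. Gorbovickis---and the paper---do not differentiate along a deformation at all. The central set is used \emph{combinatorially}: one first replaces the given disks by maximal disks in $U=\conv(\bigcup_i D_i)$ (this is harmless via Kirszbraun), then observes that the central set $C_U$ is a tree whose vertices are among the centers $p_i$. A leaf $p_k$ with neighbour $p_{k-1}$ is peeled off, and one shows $U=U_X\cup U_Y$ with $U_X=\conv(\bigcup_{i<k}D_i)$, $U_Y=\conv(D_{k-1}\cup D_k)$, $U_X\cap U_Y=D_{k-1}$. The induction hypothesis handles $U_X$, the case $k=2$ (proved directly, by a careful comparison of tangent segments and arc angles in each geometry) handles $U_Y$, and an elementary perimeter inequality for $\conv(U_X'\cup U_Y')$ versus $\per(U_X')+\per(U_Y')-\per(U_X'\cap U_Y')$ closes the induction. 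No Gauss--Bonnet, no derivative formula, and---crucially---no continuous monotone path is needed. If you want to repair your argument, the honest route is to abandon the deformation reduction and argue inductively on $k$ as above; the real work then lies in the $k=2$ base case in $\mathbb H^2$, which requires a nontrivial Saccheri-quadrilateral comparison.
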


The radii are allowed to be equal to $0$. In such a case, the disk $B(p,0)$ is just the singleton $\{p\}$. 
We shall denote the disks $B(p_i,r_i)$ and $B(p_i',r_i)$ by $D_i$ and $D_i'$, respectively. The distance function of $M$ will be denoted by $d$.

\begin{lem}\label{lemma:k_2} Theorem \ref{thm:perconv} is true for $k=2$.
\end{lem}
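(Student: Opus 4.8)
The plan is to reduce Lemma~\ref{lemma:k_2} to a one-variable monotonicity statement. Since all the relevant quantities are invariant under the isometry group of $M$, which acts transitively on ordered pairs of points at a prescribed distance (and, when $M=\mathbb S^2$, on such pairs contained in a hemisphere), the number $\per\bigl(\conv(B(p_1,r_1)\cup B(p_2,r_2))\bigr)$ depends only on $r_1$, $r_2$ and $\ell:=d(p_1,p_2)$; call it $f(\ell)$. Assume $r_1\ge r_2$. If $\ell\le r_1-r_2$, then $D_2\subseteq D_1$, so $\conv(D_1\cup D_2)=D_1$ and $f$ is constant on $[0,r_1-r_2]$. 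The smallest disk containing $D_1\cup D_2$ has radius $\max\{r_1,\tfrac12(\ell+r_1+r_2)\}$, so for $M=\mathbb S^2$ the hemisphere hypothesis amounts to $r_1\le\tfrac\pi2$ and $\ell\le\Lambda:=\pi-r_1-r_2$, a condition not destroyed by decreasing $\ell$; put $\Lambda:=\infty$ when $M\in\{\mathbb H^2,\mathbb E^2\}$. Hence it suffices to show that $f$ is continuous at $r_1-r_2$ (clear) and non-decreasing on $[r_1-r_2,\Lambda)$: then $d(p_1',p_2')\le d(p_1,p_2)$ forces $f\bigl(d(p_1',p_2')\bigr)\le f\bigl(d(p_1,p_2)\bigr)$, which is \eqref{ineq_perconv} with $k=2$.

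In the Euclidean case this is immediate: with $h$ the support function of $\conv(D_1\cup D_2)$ taken with respect to $p_1$, one has $f(\ell)=\int_0^{2\pi}h(\theta)\,d\theta=2\pi r_1+\int_0^{2\pi}\max\{0,\ell\cos\theta-(r_1-r_2)\}\,d\theta$, and for every fixed $\theta$ the integrand is a non-decreasing function of $\ell$.

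For $M\in\{\mathbb H^2,\mathbb S^2\}$ and $\ell\in(r_1-r_2,\Lambda)$ the boundary of $\conv(D_1\cup D_2)$ is made up of an arc of $\partial D_1$ of angular measure $2\pi-2\beta_1$, an arc of $\partial D_2$ of angular measure $2\pi-2\beta_2$, and two congruent common outer-tangent geodesic segments of length $t$, symmetric with respect to the geodesic $g$ through $p_1$ and $p_2$; here $\beta_i$ is the angle at $p_i$ between $g$ and the radius of $D_i$ to the point of tangency $A_i$. Since a circle of radius $r$ in $M$ has length $2\pi s(r)$ with $s=\sin$, resp.\ $\sinh$, one gets $f(\ell)=(2\pi-2\beta_1)s(r_1)+(2\pi-2\beta_2)s(r_2)+2t$. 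The quantities $\beta_1,\beta_2,t$ are governed by the quadrilateral $p_1A_1A_2p_2$, which has right angles at $A_1$ and $A_2$; cutting it along the diagonal $p_1A_2$ and using the Pythagorean theorem, the right-triangle formulas and the law of cosines of $M$, one obtains explicit expressions for $\beta_1,\beta_2,t$ in terms of $r_1,r_2,\ell$, and then differentiates $f$.

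The step I expect to be the crux is the verification that $f'(\ell)\ge 0$, and I would make its sign transparent by a first-variation argument. Letting $p_2$ recede from $p_1$ along $g$ at unit speed is the same as letting the flow of the translation Killing field $\xi$ of $g$ act on $D_2$. The arc of $\partial D_1$ stays put and the four points of tangency slide along their circles; at each of them the tangent segment touches the circle, so the length gained or lost along the arc cancels the length lost or gained along the adjacent segment, while the rigid motion moves the arc of $\partial D_2$ isometrically. What survives is the effect of the ambient velocity $\xi$ at the two tangency points lying on $\partial D_2$; by the reflection symmetry in $g$ this gives
\[
f'(\ell)=2\,\langle\xi(A_2),\hat n\rangle,
\]
where $\hat n$ is the unit vector at $A_2$ tangent to the tangent segment and pointing away from $D_1$. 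Because $\xi$ is a Killing field and geodesics are auto-parallel, $\langle\xi,\dot\gamma\rangle$ is constant along any geodesic $\gamma$; evaluating at the point where the line of the tangent segment meets $g$ (or, if $r_1=r_2$, at the point of that line nearest $g$) shows that there $\xi$ and $\hat n$ both point away from $D_1\cup D_2$ and enclose an angle that is at most a right angle, whence $f'(\ell)\ge 0$. The remaining boundary situations $\ell=r_1-r_2$, $r_1=r_2$, and $\ell=\Lambda$ for $M=\mathbb S^2$ are settled by continuity. Alternatively the whole of the non-Euclidean case can be finished by differentiating the closed-form expression from the previous paragraph and reducing $f'(\ell)\ge0$ to an elementary inequality in $r_1,r_2,\ell$, at the cost of a separate short computation in $\mathbb H^2$ and in $\mathbb S^2$.
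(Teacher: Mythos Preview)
Your strategy is genuinely different from the paper's and is essentially sound. The paper does not differentiate $f(\ell)$: for $M\in\{\mathbb E^2,\mathbb S^2\}$ it uses the concavity/linearity of the distance from a geodesic to show that, after placing $p_1'=p_1$ and $p_2'\in[p_1,p_2]$, the disk $D_2'$ is already contained in $\conv(D_1\cup D_2)$, so the conclusion follows from monotonicity of the perimeter under inclusion of convex sets. In $\mathbb H^2$ this inclusion can fail, and the paper instead compares directly the three pieces of the two boundaries --- tangent segment length and the two arc angles --- using a synthetic argument based on a Saccheri quadrilateral. Your route is more uniform: your first-variation identity $f'(\ell)=2\langle\xi(A_2),\hat n\rangle$ is correct (the boundary terms at the tangency points indeed cancel because the boundary is $C^1$ there, and the integral over the $\partial D_2$-arc reduces by $\frac{d}{ds}\langle\xi,T\rangle=\langle\xi,\nabla_T T\rangle$ to the boundary values), and the conservation of $\langle\xi,\dot\gamma\rangle$ along geodesics is exactly the right tool to transport the sign question to a convenient point on $e$.

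Where your write-up has a gap is precisely in that sign verification. Your dichotomy ``evaluate at $e\cap g$, or if $r_1=r_2$ at the nearest point'' is incomplete in $\mathbb H^2$: the outer common tangent $e$ and the axis $g$ can be ultraparallel even when $r_1>r_2$, so you need the nearest-point variant there too. More importantly, the assertion that at $P=e\cap g$ the vectors $\xi(P)$ and $\hat n$ ``enclose an angle that is at most a right angle'' is stated but not argued. A clean way to close both gaps at once is this: in the intersecting case, the geodesic triangle $P\,p_2\,q_2$ has a right angle at $q_2$ (since $p_2q_2\perp e$), hence in $\mathbb H^2$ and $\mathbb E^2$ the angle at $P$ is acute, and in $\mathbb S^2$ one checks that the legs are $<\pi/2$ under the hemisphere hypothesis and then uses Napier's rule $\tan(\angle P)=\tan r_2/\sin d(P,q_2)$ to get $\angle P<\pi/2$; in the ultraparallel case (hyperbolic only), at the foot $Q$ of the common perpendicular both $\xi(Q)$ and $T$ are orthogonal to that perpendicular and hence parallel, and a convexity argument for $d(\cdot,e)|_g$ places $Q$ beyond $q_2$, forcing them to point the same way. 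With these two sentences added your argument goes through; as written, the crucial inequality $f'(\ell)\ge0$ rests on an unproved claim.
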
 
\begin{proof}We may assume without loss of generality that $r_1\geq r_2$.
Moving the contracted system by a suitable isometry, it can also be assumed that  $p_1'=p_1$ and $p_2'$ is in the segment $[p_1,p_2]$. If $p_2'=p_2$ or $D_1'\supseteq D_2'$, then inequality \eqref{ineq_perconv} is obviously true. Suppose that $d(p_1,p_2)>d(p_1',p_2')$ and $D_1'\not\supseteq D_2'$. Then the convex hulls of  $D_1\cup D_2$ and  $D_1'\cup D_2'$ are bounded by two segments of the common tangent lines of the disks and two circle arcs.
	
	We are going to use the following fact.
		\begin{figure}[b]
			\centering{\includegraphics[height=4.7cm]{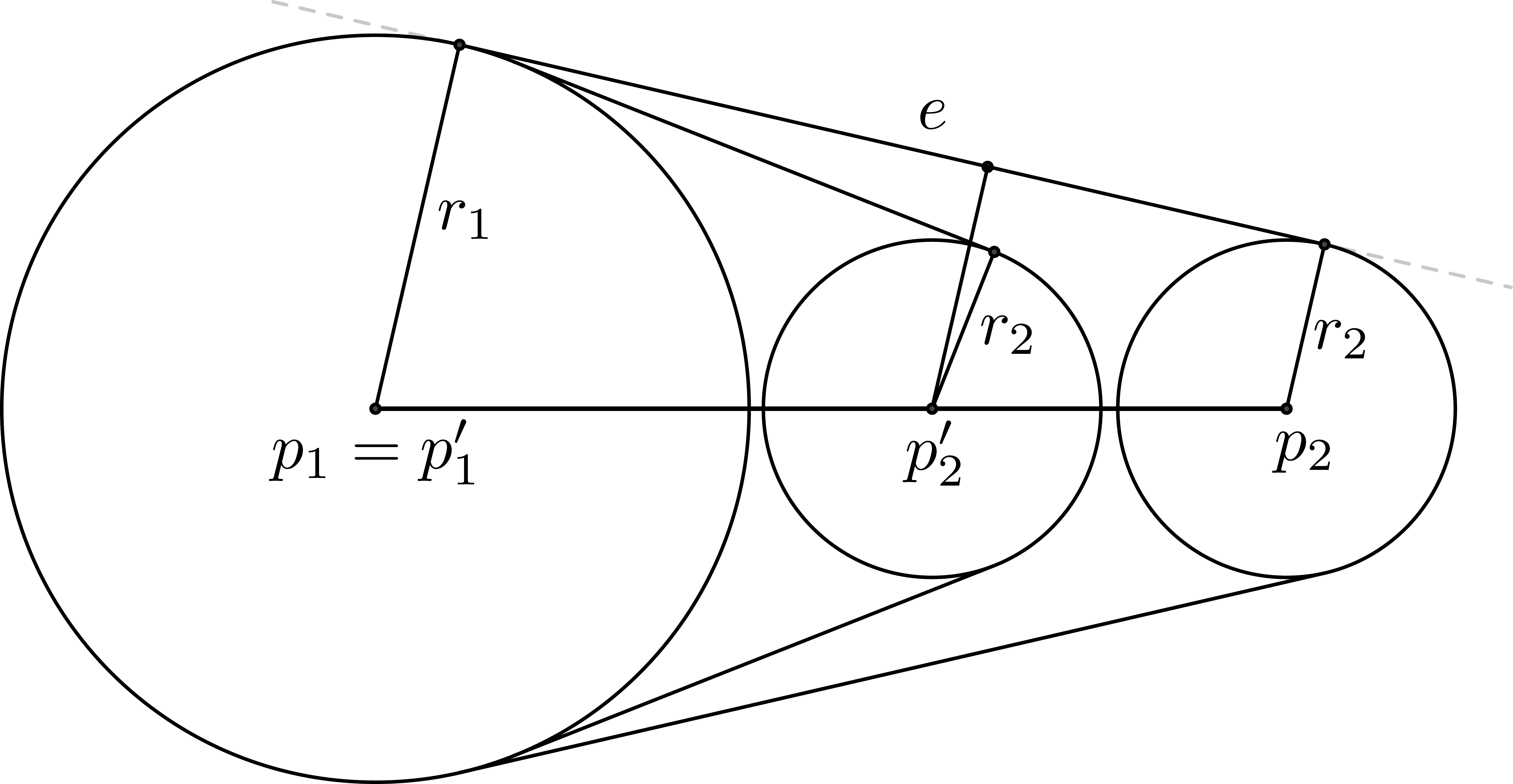}}
			\caption{The case of $2$ disks for $M\in \{\mathbb E^2, \mathbb S^2\}$.\label{k_2_1} }
		\end{figure}
	\begin{claim}	
Let $\gamma\colon [a,b]\to M$ be a parameterized geodesic segment in $M$, disjoint from the complete geodesic line $e$, and consider the distance function $\delta\colon [a,b]\to \mathbb R$, $\delta(t)=d(\gamma(t),e)$. Then $\delta$ is convex if $M=\mathbb H^2$, linear if $M=\mathbb E^2$, and concave if $M=\mathbb S^2$. In particular, if $M\in\{\mathbb H^2,\mathbb E^2\}$, then $\delta$ attains its maximum at one of the endpoints of the interval $[a,b]$; if $M\in\{\mathbb E^2,\mathbb S^2\}$, then $\delta$ attains its minimum either at  $a$ or at $b$.  
	\end{claim}
	
 Applying the Claim for the segment $[p_1,p_2]$ and one of the common tangents $e$ of the disks $D_1$ and $D_2$, we obtain that if $M\in\{\mathbb E^2,\mathbb S^2\}$, then the distance of $p_2'$ from $e$ is at least $\min\{d(p_1,e),d(p_2,e)\}=r_2$, see Figure \ref{k_2_1}. Thus, the disk $D_2'$ is contained in the convex hull $\conv(D_1\cup D_2)$, consequently, we also have
	\[
	\conv(D_1'\cup D_2')\subseteq\conv(D_1\cup D_2).
	\] 
	This containment implies inequality \eqref{ineq_perconv} between the perimeters of the convex hulls.
	
	\begin{figure}[t]
		\centering{\includegraphics[height=6.5cm]{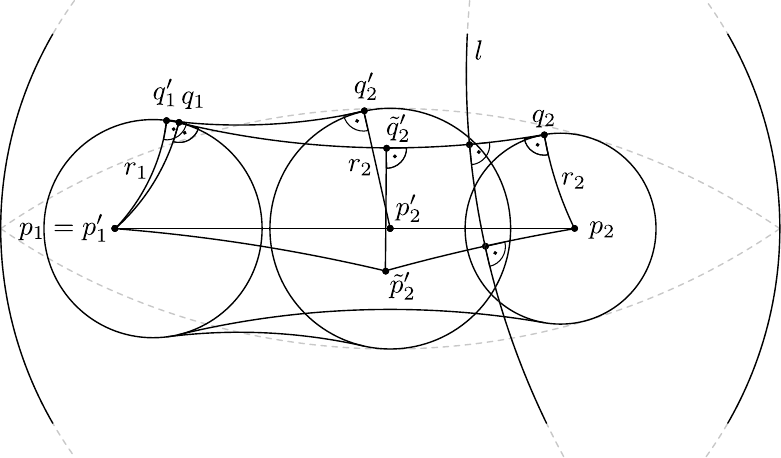}}
		\caption{The case of $2$ disks in $\mathbb H^2$.\label{k_2_2a} }
	\end{figure}
	
	If $M=\mathbb H^2$, then a similar argument completes the proof if the convex hull $\conv(D_1\cup D_2)$ covers the disk $D_2'$, so assume that this is not the case.  An arrangement of disks for which 
	\begin{equation}\label{eq:nonincl}
	D_2'\not \subseteq \conv(D_1\cup D_2),
	\end{equation} constructed in Poincar\'e's disk model of the hyperbolic plane is shown in Figure \ref{k_2_2a}.  The dashed grey lines are the equidistant lines to the straight line $p_1p_2$ with distance $r_2$. They allow us to compare the radii of the disks. 
	
	Choose one of the outer common tangents of the disks $D_1$ and $D_2$ and let $q_1$ and $q_2$ be the orthogonal projections of $p_1$ and $p_2$ onto it, respectively. Similarly, take the outer common tangent of the disks $D_1'$ and $D_2'$ lying on the same side of the line $p_1p_2$ as the segment $[q_1,q_2]$ and define $q_1'$ and $q_2'$ as the orthogonal projections of $p_1'$ and $p_2'$ onto it, respectively.
	
	Rotate the quadrangle $p_1'q_1'q_2'p_2'$ about $p_1$ with angle $\angle q_1'p_1q_1$ in the direction taking $q_1'$ to $q_1$, and let $\tilde{q}_2'\in [q_1,q_2]$ and $\tilde{p}_2'$ be the images of the vertices $q_2'$ and $p_2'$, respectively. As $\angle \tilde{p}_2' \tilde{q}_2'q_2=\angle {p}_2' {q}_2'q_1'$ and $\angle p_2q_2\tilde{q}_2'$ are right angles and $d(\tilde{p}_2', \tilde{q}_2')=d(p_2,q_2)=r_2$, the quadrangle $\tilde{p}_2' \tilde{q}_2'q_2p_2$ is a Saccheri quadrilateral. This means that the orthogonal bisector $l$ of the side $[q_2,\tilde{q}_2']$ is a symmetry axis of the quadrangle, therefore it bisects orthogonally also the segment $[p_2,\tilde{p}_2']$. 
	
	For a pair of points $a,b\notin l$, denote by $a\sim b$ the equivalence relation that $a$ and $b$ are on the same side of $l$. As the lines $q_1p_1$, $\tilde{q}_2'\tilde{p}_2'$ and $l$ are orthogonal to the line $q_1q_2$, they are ultraparallel, therefore disjoint, consequently, $q_1\sim p_1$, $\tilde{q}_2'\sim \tilde{p}_2'$. Since $d(p_1,\tilde{p}_2')=d(p'_1,p_2')<d(p_1,p_2)$, $p_1$ is lying on the same side of the orthogonal bisector $l$ of the segment $[\tilde{p}_2',p_2]$ as the point $\tilde{p}_2'$, that is $p_1\sim \tilde{p}_2'$. However, $l$ is also the orthogonal bisector of the segment $[q_2,\tilde{q}_2']$, so  $q_1\sim p_1 \sim \tilde{p}_2'\sim \tilde{q}_2'$ implies
	\begin{equation}\label{erinto_comp}
	d(q_1',q_2')=d(q_1,\tilde{q}_2')<d(q_1,q_2).
	\end{equation}

As the disk $D_2'$ intersects the tangent $q_1q_2$ of the disk $D_1$ by \eqref{eq:nonincl}, while  $q_1'q_2'$ is an outer common tangent of the two disks, we must have
\begin{equation}\label{szog1_comp}
\angle q_1p_1p_2<\angle q_1'p_1'p_2'.
\end{equation}
\begin{figure}[t]
	\centering{\includegraphics[height=5.7cm]{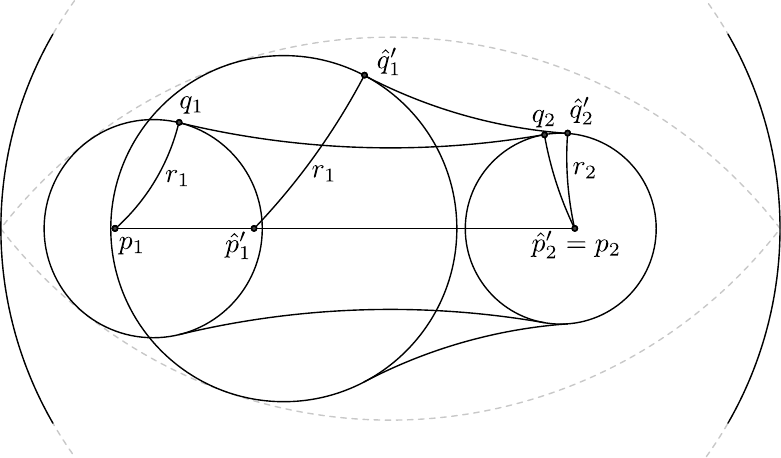}}
	\caption{Repositioning of the disks.\label{k_2_2b} }
\end{figure}
We can also prove the inequality
\begin{equation}\label{szog2_comp}
\angle q_2p_2p_1\leq \angle q_2'p_2'p_1'.
\end{equation}
For this purpose, reposition the disks of the contracted system by an isometry, which takes the points $p_1'$, $p_2'$, $q_1'$, $q_2'$ to $\hat p_1'\in [p_1,p_2]$, $\hat p_2'=p_2$, $\hat q_1'$, $\hat q_2'$, respectively, see Figure \ref{k_2_2b}. For the new arrangement, the distance of the point $\hat p_1'$ from the straight line $q_1q_2$ is at most $\max\{d(p_1,q_1),d(p_2,q_2)\}=r_1$ by the Claim. Using the fact that the disk $B(\hat p_1',r_1)$ is either tangent to the line $q_1q_2$ or crosses it, inequality \eqref{szog2_comp} follows just as inequality  \eqref{szog1_comp}.

It is known that if the sectional curvature of $\mathbb H^2$ is $-k^2$, then the perimeter of a circle of radius $r$ in $\mathbb H^2$ is  $2\pi \sigma(r)$, where $\sigma(r)=\sinh(rk)/k$. Hence we conclude by \eqref{erinto_comp}, \eqref{szog1_comp}, and \eqref{szog2_comp} that 
\begin{align*}
\per(\conv(D_1\cup D_2))&=2\big((\pi-\angle q_1p_1p_2)\sigma(r_1) +d(q_1,q_2)+(\pi-\angle q_2p_2p_1)\sigma(r_2) \big)\\
&>2\big((\pi-\angle q_1'p_1'p_2')\sigma(r_1) +d(q_1',q_2')+(\pi-\angle q_2'p_2'p_1')\sigma(r_2) \big)\\
&=\per(\conv(D_1'\cup D_2')).\qedhere
\end{align*}
\end{proof}

Let us recall the definition of the central set of a compact subset of $M$.

\begin{defin}
	Let $U\subset M$ be a compact subset. A closed disk $D\subseteq U$ (possibly with radius $0$) is said to be \emph{maximal} in $U$ if there is no closed disk $\widetilde D$ such that $D\subsetneq \widetilde D\subseteq U$.
\end{defin}

\begin{defin}
	The \emph{central set} $C_U$ of the compact subset $U\subset M$ is the set of the centers of the maximal disks in $U$.
\end{defin}

We shall consider the central set $C_U$ of the convex hull  
\[U=\conv\left(\bigcup_{i=1}^kD_i\right).\] 
\begin{figure}[b]
	\centering{\includegraphics[height=5.3cm]{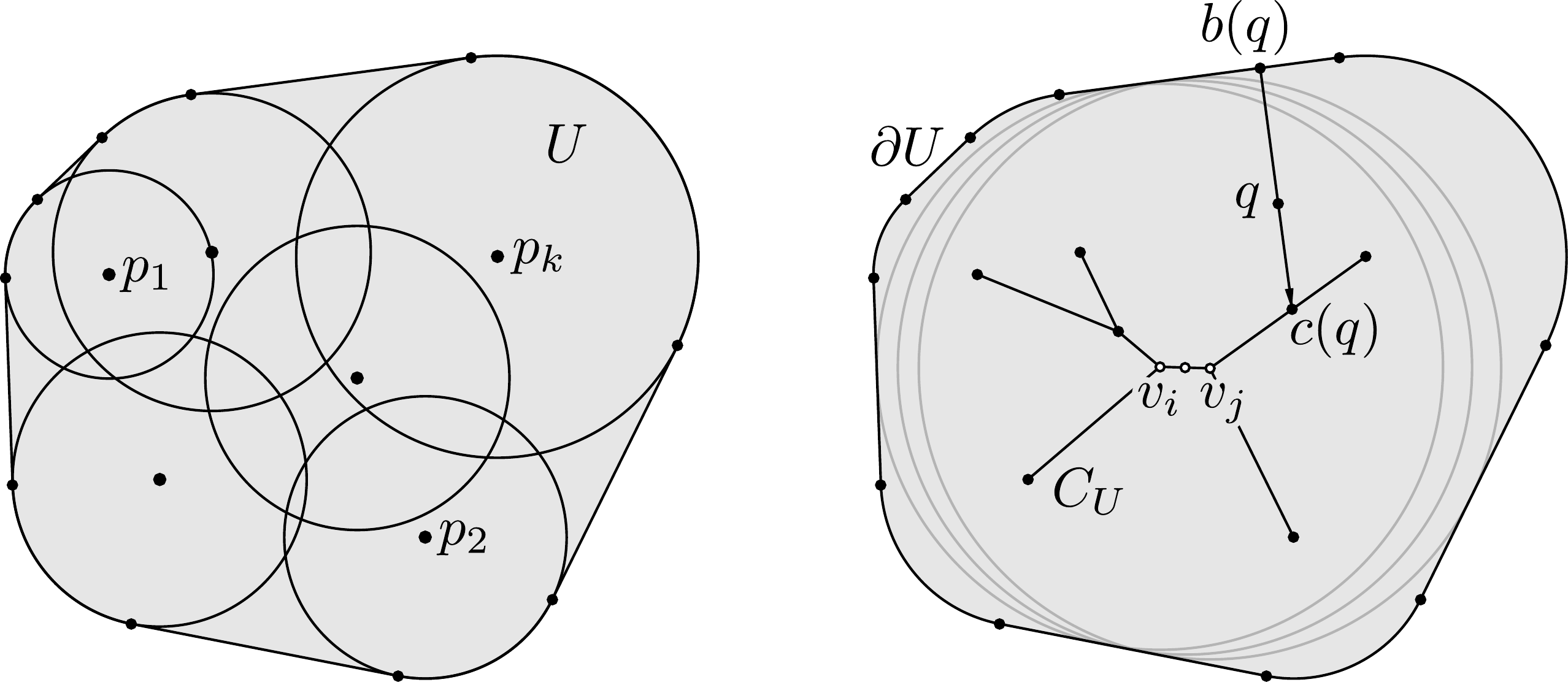}}
	\caption{The central set of $U$ and the deformation retraction of $U$ onto it.\label{retraction} }
\end{figure}
\begin{lem}\label{lemma:tree} Let $v_1,\dots, v_l$ be the centers of those maximal disks in $U$, which have at least $3$ points in common with the boundary $\partial U$ of $U$, or have radius $0$. The central set $C_U$ is a tree, the edges of which are geodesic segments connecting certain pairs of the vertices $v_1,\dots, v_l$.
\end{lem}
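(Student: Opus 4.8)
The plan is to analyze the local structure of the central set $C_U$ by looking at the distance function $\rho\colon U \to \mathbb{R}$, $\rho(x) = d(x, \partial U)$, and then use the convexity of $U$ to control the topology. First I would observe that since $U$ is convex with non-empty interior (if the interior is empty, $U$ is a segment or a point and the claim is trivial), the function $\rho$ is concave on $U$ in the Euclidean and spherical cases, and more generally $\rho$ is a well-behaved concave-type function whose super-level sets $\{\rho \ge t\}$ are convex; the inner parallel bodies of a convex set are convex. A disk $D = B(x, \rho(x))$ is maximal in $U$ exactly when $x$ is a point where $\rho$ does not increase in any direction, i.e. a local (hence, by concavity, global-on-its-level-set) maximum along every inner normal ray. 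The key elementary fact is that $D=B(x,\rho(x))$ is maximal if and only if $\partial D \cap \partial U$ is not contained in an open half-space-bounding geodesic through $x$ — equivalently, $x$ lies in the convex hull of the "contact points" $\partial D \cap \partial U$, projected appropriately. From this one reads off that if $\partial D \cap \partial U$ consists of exactly two points, they must be antipodal on $\partial D$, and the center can slide along the segment between the two corresponding boundary pieces; if it has one point, $D$ is not maximal unless $\rho$ achieves a strict interior max there, forcing $\ge 2$ contacts anyway. So every point of $C_U$ either has $\ge 3$ contact points (or radius $0$, a degenerate isolated case), or has exactly $2$ antipodal contact points, and in the latter case a neighborhood of $x$ in $C_U$ is a geodesic segment.

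Next I would establish connectedness of $C_U$. The standard tool here is the \emph{medial-axis / grassfire} deformation retraction: push each point $x \in U$ along the gradient direction of $\rho$ (the direction maximizing $\rho$) until it reaches $C_U$. Concavity of the inner parallel bodies guarantees this flow is well-defined and continuous and that $C_U$ is its image; hence $C_U$ is a deformation retract of $U$. Since $U$ is convex, it is contractible, so $C_U$ is connected and simply connected (in fact, as a compact subset of a surface that deformation-retracts a disk, it is a "tree-like" continuum — contractible, and with empty interior since it has measure zero). Combined with the local analysis above, $C_U$ is a connected set that is locally a single geodesic arc at all but finitely many points $v_1,\dots,v_l$ (the $\ge 3$-contact points and the radius-$0$ points), where finitely many arcs meet. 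Finiteness of these special vertices follows because each maximal disk with $\ge 3$ boundary contacts is determined by a triple chosen among the finitely many circular arcs and segments composing $\partial U$ (there are $\le k$ circular arcs and $\le k$ bounding tangent segments), so there are only finitely many such disks; radius-$0$ maximal disks occur only at isolated "corner" points of $U$, again finitely many.

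Finally, assembling the pieces: $C_U$ is a compact, connected, simply connected graph — a finite graph because it has finitely many vertices of degree $\ne 2$ and each edge is a geodesic arc joining two of the $v_i$ (an edge cannot be a closed loop at a single vertex without creating a cycle, and it cannot run off to infinity since $C_U \subseteq U$ is bounded) — so it is a tree whose edges are geodesic segments connecting pairs of the $v_i$. I expect the main obstacle to be making the local classification of maximal disks fully rigorous, in particular handling the $\mathbb{H}^2$ case where $\rho$ is not concave in the naive sense: one must instead work with the (provable) convexity of the \emph{exterior} distance and the fact that inner parallel bodies of hyperbolic convex sets are still convex, and carefully treat disks tangent internally to a single circular arc of $\partial U$, where one must check that sliding toward the arc's center strictly increases the inscribed radius. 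The second delicate point is ruling out cycles — i.e. showing the retraction really kills $H_1$ — which rests on the continuity and fiber-connectedness of the grassfire flow; I would cite the analogous treatment in Gorbovickis \cite{Gorbovickis} and adapt it.
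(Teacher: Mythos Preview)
Your two-part strategy---local structure of $C_U$ plus a deformation retraction onto $C_U$---matches the paper's approach, and the retraction argument is essentially the same as theirs. However, the local analysis contains a genuine error.

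You claim that $B(x,\rho(x))$ is maximal exactly when $\rho$ does not increase in any direction, and deduce that a maximal disk with exactly two boundary contacts must have them antipodal. Both assertions are false. Maximality of $B(x,\rho(x))$ is equivalent to $x$ having at least two nearest points on $\partial U$; it does \emph{not} require $x$ to be a critical point of $\rho$, and the contact points need not be antipodal. A concrete counterexample: take $U=\conv(D_1\cup D_2)$ with $r_1\neq r_2$. The two outer common tangent segments are not parallel, yet every disk centered on the segment $[p_1,p_2]$ and tangent to both is a maximal disk in $U$, with non-antipodal contact points; and $\rho$ strictly increases along $[p_1,p_2]$ toward the larger disk. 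So your ``open half-space'' criterion and the antipodality conclusion both fail.

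This matters because the antipodality claim was your route to the conclusion that the edges of $C_U$ are \emph{geodesic} segments. For a generic convex domain the medial-axis edges are curved, so something specific to the present $U$ must be used. The paper's argument is: $\partial U$ consists of arcs of the circles $\partial D_i$ together with geodesic segments, and any maximal disk tangent to an arc of $\partial D_i$ is contained in $D_i$ and hence (being maximal) must equal $D_i$; therefore every maximal disk is tangent to at least two \emph{geodesic} sides of $U$. The centers of maximal disks tangent to a fixed pair of geodesic sides lie on the bisector of those two lines, which is itself a geodesic, giving the geodesic-segment edges. You should replace the antipodality step with this observation.
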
 
\begin{proof} The boundary of $U$ is a piecewise smooth curve consisting of geodesic segments and circle arcs. If $\partial U$  is a circle, then $C_U$ is a single point, so the lemma holds. If $\partial U$ is not a circle, then every maximal disk is tangent to at least two geodesic sides of $U$.  If there exist maximal disks which are tangent to two given geodesic sides of $U$, then the centers of all such disks move along a geodesic segment, the endpoints of which belong to the set $\{v_1,\dots,v_l\}$. This segment may degenerate to a point. Thus, $C_U$ is the union of a finite number of geodesic segments of the form $[v_i,v_j]$. 
	
	For any point $q\in U\setminus C_U$, there is a unique point $b(q)\in \partial U$ for which $d(q,\partial U)=d(q,b(q))$. The geodesic entering the interior of $U$ at $b(q)$ orthogonally to $\partial U$ has a first intersection point $c(q)$ with $C_U$ after passing through $q$.  Moving each point $q\in U\setminus C_U$ simultaneously with constant speed along the geodesic segment $[q,c(q)]$ so that each point $q$ arrives in $c(q)$ after a given unit time, we obtain a deformation retraction of $U$ onto $C_U$. As $U$ is convex, hence homotopically trivial, the central set $C_U$ is also contractible, so it must be a tree, the edges of which are geodesic segments. For details of the proof, we refer to Section 4 of \cite{Gorbovickis}, for illustration, see Figure \ref{retraction}.
\end{proof}

\begin{proof}[Proof of Theorem \ref{thm:perconv}]
By Theorem \ref{Kirszbraun}, there exists a contraction $f\colon M\to M$ such that $f(p_i)=p_i'$ for $i=1,\dots,k$.

It is enough to consider the case, when all the disks $D_i$ are maximal in $U$. Indeed, assuming that this special case is true, we can prove the general case as follows. Take for every disk  $D_i$ a maximal disk $B(\tilde p_i,\tilde r_i)$ in $U$ which contains $D_i$. Then we have 
\[
U=\conv\left(\bigcup_{i=1}^k D_i\right)=\conv\left(\bigcup_{i=1}^kB(\tilde p_i,\tilde r_i)\right),
\]
and $\tilde r_i-r_i\geq d(\tilde p_i,p_i)\geq d(f(\tilde p_i),f(p_i))$, therefore $B(f(\tilde p_i),\tilde r_i)\supseteq D_i'$. Thus, applying the special case of the theorem for the maximal disks $B(\tilde p_1,\tilde r_1),\dots,B(\tilde p_k,\tilde r_k)$, and the contraction of the centers by $f$, we obtain
\begin{align*}\per\left(\conv\left(\bigcup_{i=1}^k D_i\right)\right)&=\per\left(\conv\left(\bigcup_{i=1}^kB(\tilde p_i,\tilde r_i)\right)\right)\geq \\ &\geq\per\left(\conv\left(\bigcup_{i=1}^kB(f(\tilde p_i),\tilde r_i)\right)\right)\geq \per\left(\conv\left(\bigcup_{i=1}^kD_i'\right)\right).
\end{align*}

By Lemma \ref{lemma:tree}, the central set of $U$ is a tree with vertices $v_1,\dots,v_l$. We may assume that each maximal disk $B(v_i,\rho_i)\subseteq U$ centered at a vertex $v_i$ of $C_U$ belongs to the initial system  $D_1,\dots,D_k$, because adding these disks to the initial system, and the disks $B(f(v_i),\rho_i)$ to the contracted system, inequality \eqref{ineq_perconv} becomes sharper as its left hand side remains the same while its right hand side does not decrease.

For brevity, we shall say that the initial system of disks is \emph{sharpened} if the disks $D_i$ are maximal in $U$, and $\{v_1,\dots,v_l\}\subseteq \{p_1,\dots,p_k\}$. From now on, we shall assume that the initial system of disks is sharpened, and we prove the theorem under this assumption by induction on the number $k$ of the disks. 

The initial case $k=1$ is obvious. Assume that the statement is true for any sharpened system of $k-1$ disks and its contraction, and consider a sharpened system of $k$ disks. The set $C_U$ is a tree on the vertices $v_1,\dots, v_l$, and the points $p_i$ are located in $C_U$. Subdividing each edge of $C_U$ by the points $p_i$ lying on that edge, we obtain a tree on the vertices $p_1,\dots,p_k$. This tree has a degree one vertex. We may assume without loss of generality, that $p_k$ has degree one, and its neighbor is $p_{k-1}$.

Consider the decomposition $C_U=X\cup Y$ of the central set $C_U$ into the closed subsets $X$ and $Y$, where $X$ is the subtree of $C_U$ spanned by the vertices $p_1,\dots,p_{k-1}$, and $Y$ is the segment $[p_{k-1},p_k]$. Following \cite{Gorbovickis}, for a closed subset $A\subseteq C_U$, we denote by $U_A$ the union of those maximal disks in $U$, the centers of which belong to $A$. We prove that in our case,
\begin{align}
U_Y&=\conv(D_{k-1}\cup D_k),\label{tree_decomp2}\\
U_X&=\conv\left(\bigcup_{i=1}^{k-1}D_i\right).\label{tree_decomp1}\end{align}
Indeed, by the description of the segments making up the central set of $U$, given in the proof of Lemma \ref{lemma:tree}, a maximal disk in $U$ has its center on the segment $[p_{k-1},p_k]$ if and only if it is tangent to the common outer tangent segments of the maximal disks centered at $p_{k-1}$ and $p_k$, which are the disks $D_{k-1}$ and $D_k$.  It is clear that the union of all these disks is the convex hull of the union $D_{k-1} \cup D_k$, so \eqref{tree_decomp2} holds.

Similarly, if a maximal disk $D$ in $U$ is centered in $X$, then it belongs to a segment $[p_i,p_j]$ for some $1\leq i<j\leq k-1$, and then it is tangent to both outer common tangent segments of $D_i$ and $D_j$, which implies that 
\[
D\subseteq \conv(D_i\cup D_j)\subseteq \conv\left(\bigcup_{i=1}^{k-1}D_i\right).
\]
In particular,
\[
U_X\subseteq \conv\left(\bigcup_{i=1}^{k-1}D_i\right).
\]
To show the reversed containment, choose an arbitrary point $p$ in $\conv\left(\bigcup_{i=1}^{k-1}D_i\right)$, and choose a maximal disk $B(c,r)$ in $U$ that covers $p$. If $c\in X$, or $p\in D_{k-1}$, then $p\in U_X$, and we are done. Denote the set $\conv\big(D_{k-1} \cup D_k\big)\setminus D_{k-1}$ by $W$. If $c\notin X$ and $p\notin D_{k-1}$, then $c\in Y$ and $p\in W$. We prove that this case cannot occur. The two outer common tangent lines of the disks $D_{k-1}$ and  $D_{k}$ are supporting lines of $U$, thus, each of them bounds a closed halfplane containing $U$. The intersection of these two halfplanes is a convex domain $Q\supseteq U$. The boundaries of $D_{k-1}$ and $W$ intersect in an arc, which cuts $Q$ into two closed non-overlapping parts $Q_1$, $Q_2$, so that $Q_1\supseteq W$, $Q_2\supseteq D_{k-1}$. It is clear from the construction that $W\cup Q_2\supseteq U$.  If for an index $1\leq i \leq k-2$, the disk $D_i$ had a point in the domain $W$, then it would either be contained in $W$, or it would intersect the boundary arc between $W$ and $D_{k-1}$, in which case, the difference $D_i\setminus W$ would be covered by $D_{k-1}$. In both cases, we would have $D_i\subseteq U_Y$. However, this is not possible because all maximal disks in $U_Y$ are centered at a point in $Y$, but $p_i\in Y$ would contradict the assumption that the neighbor of $p_k$ is $p_{k-1}$. The contradiction shows that the convex domain $Q_2$ contains all the disks $D_1,\dots,D_{k-1}$, and therefore, it contains also their convex hull. But then the convex hull cannot have any point in $W$, as we claimed. This completes the proof of \eqref{tree_decomp1}.

From general results of I.~Gorbovickis, \cite[Lemmata 4.14, 4.15]{Gorbovickis} we also have
\begin{align}
U_X\cap U_Y&=U_{X\cap Y}=D_{k-1},\label{tree_decomp3}\\
C_{U_X}&=X.\label{tree_decomp4}
\end{align}
Denote by $U_X'$ and $U_Y'$ the convex hulls
\[
U_X'=\conv\left(\bigcup_{i=1}^{k-1}D_i'\right)\quad\text{ and }\quad U_Y'=\conv(D_{k-1}'\cup D_k').
\]
As $U_X'$ and $U_Y'$ are convex, their union is star-like and therefore, the boundary of the union $U_X'\cup U_Y'$ is a simple closed curve. The perimeter of the convex hull of a simple closed curve is at most as long as the curve, assuming that the curve is contained in a closed hemisphere when $M=\mathbb S^2$. Thus, 
\begin{equation}\label{ineq:5}\begin{aligned}
\per\left(\conv\left(\bigcup_{i=1}^{k}D_i'\right)\right)&=\per\left(\conv(U_X'\cup U_Y')\right)\\ &\leq \per(U_X'\cup U_Y')=\per(U_X')+\per(U_Y')-\per(U_X'\cap U_Y').
\end{aligned}
\end{equation}
The disks $D_1,\dots,D_{k-1}$ form a sharpened system by \eqref{tree_decomp4}. The induction hypothesis and Lemma \ref{lemma:k_2} yield that
\begin{equation}\label{ineq:6}
\per(U_X')\leq \per(U_X)\quad \text{ and }\quad \per(U_Y')\leq \per(U_Y).
\end{equation}

By equation \eqref{tree_decomp3} and  $U_X'\cap U_Y'\supseteq D_{k-1}'$, we also have 
\begin{equation}\label{ineq:7}
\per(U_X'\cap U_Y')\geq \per(U_X\cap U_Y).
\end{equation}
Inequalities \eqref{ineq:5}, \eqref{ineq:6}, and \eqref{ineq:7} provide

\begin{equation*}\begin{aligned}
\per\left(\conv\left(\bigcup_{i=1}^{k}D_i'\right)\right)&\leq \per(U_X')+\per(U_Y')-\per(U_X'\cap U_Y')\\
&\leq \per(U_X)+\per(U_Y)-\per(U_X\cap U_Y)=\per(U_X\cup U_Y)\\
&=\per\left(\conv\left(\bigcup_{i=1}^{k}D_i\right)\right),
\end{aligned}
\end{equation*}
as we wanted to prove.
\end{proof}

\section{The area of the intersection of finitely many disks \label{sec:3}}
The dual Kneser--Poulsen conjecture was proved by 
K.~Bezdek and R.~Connelly \cite{Bezdek_Connelly} in the Euclidean plane, and by I.~Gorbovickis in the sphere under some assumptions  (Theorem \ref{thm:Gorbovickis_metszet}).
In this section, we complete these results by extending the dual Kneser--Poulsen conjecture to the hyperbolic plane. Our proof works also for the Euclidean plane, so we formulate the theorem for both planes. 

\begin{thm}\label{thm:area_int}
	If $M\in\{\mathbb H^2,\mathbb E^2\}$, and the configuration $(p_1',\dots,p_k')\in M^k$ is a contraction of the configuration $(p_1,\dots,p_k)\in M^k$, then for any choice of the radii $(r_1,\dots,r_k)$, we have
	\begin{equation} \label{ineq:intersection}
	\area\left(\bigcap_{i=1}^kB(p_i,r_i)\right)\leq \area\left(\bigcap_{i=1}^kB(p_i',r_i)\right).
	\end{equation}
\end{thm}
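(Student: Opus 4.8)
The plan is to mimic the strategy used in Section \ref{sec:2}, replacing the central set by a \emph{co-central set}: for the compact convex domain $V=\bigcap_{i=1}^k B(p_i,r_i)$ (which we may assume is nonempty and full-dimensional, the degenerate cases being trivial), we would study the set of centers of disks that are \emph{maximal among the disks containing $V$}, equivalently, the ``inscribed'' structure dual to $C_U$. First I would reduce to the case where every $B(p_i,r_i)$ is a minimal such circumscribed disk of $V$ (if $B(p_i,r_i)\supseteq B(\tilde p_i,\tilde r_i)\supseteq V$ with the latter minimal, then $\tilde r_i - r_i \le -d(p_i,\tilde p_i)$ fails in the wrong direction, so instead one shrinks radii: replacing $r_i$ by a smaller $\tilde r_i$ with $d(p_i',\cdot)$-ball still covering $V'$ only makes the right-hand side smaller and the left-hand side... — the correct reduction is the dual one, adding the circumscribed disks of $V$ to the system, which does not change $V$ and can only increase $\area(V')$, matching the ``sharpening'' step of Theorem \ref{thm:perconv}). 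Then, as before, by Theorem \ref{Kirszbraun} pick a global contraction $f\colon M\to M$ with $f(p_i)=p_i'$, so that $V'\supseteq \bigcap_i B(f(p_i),r_i)$ and it suffices to bound the latter.

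The heart of the argument is a structural lemma analogous to Lemma \ref{lemma:tree}: the co-central set of $V$ is a tree whose vertices correspond to the ``minimal circumscribed'' disks meeting $\partial V$ in at least three points (or of maximal radius in the relevant sense), and whose edges are geodesic segments; this should follow by a deformation-retraction argument as in \cite{Gorbovickis}, Section~4, applied to the complement structure or to the family of circumscribed disks. Having this, I would run the same induction on $k$: locate a degree-one vertex $p_k$ with neighbour $p_{k-1}$, split the co-central tree as $X\cup Y$ with $Y=[p_{k-1},p_k]$, and establish the dual decomposition $V_Y=B(p_{k-1},r_{k-1})\cap B(p_k,r_k)$, $V_X=\bigcap_{i=1}^{k-1}B(p_i,r_i)$, together with $V_X\cap V_Y=V$, $V_X\cup V_Y \supseteq V_{X\cap Y}$, and $C_{V_X}=X$ (the co-central analogues of \eqref{tree_decomp1}--\eqref{tree_decomp4}). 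The base case $k=2$ — that $\area(B(p_1,r_1)\cap B(p_2,r_2))$ does not decrease under a contraction of the two centers — must be proved directly, by an elementary two-circle computation using the monotonicity of the relevant lens area as a function of the center distance, in the spirit of Lemma \ref{lemma:k_2} and with the concavity/convexity Claim replaced by the appropriate statement for the area of a circular segment in $\mathbb H^2$ and $\mathbb E^2$.

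The inclusion–exclusion step now runs in the opposite direction: from $V'=\conv$-free, we use $\area(V_X'\cup V_Y')=\area(V_X')+\area(V_Y')-\area(V_X'\cap V_Y')$, the containment $\bigcap_{i=1}^k B(f(p_i),r_i)=V_X'\cap V_Y'$, the induction hypothesis $\area(V_X')\ge\area(V_X)$ and $\area(V_Y')\ge\area(V_Y)$ from the $k=2$ case, and an inequality $\area(V_X'\cap V_Y')\ge \area(V_X\cap V_Y)=\area(V)$ — wait, that is automatic only if $V_X'\cup V_Y' \subseteq$ something; more carefully, since $V' \supseteq V_X'\cap V_Y'$ and $V=V_X\cap V_Y$, one needs $\area(V_X'\cap V_Y')\ge\area(V)$, which should follow from $V_X'\cap V_Y'\supseteq$ (image under a contraction of a set whose area we control) — assembling these gives $\area(V)\le\area(V')$.

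I expect the main obstacle to be the construction and tree-property of the co-central set, and in particular verifying the dual decomposition identities: unlike the convex-hull case, where the outer common tangents of $D_{k-1}$ and $D_k$ cleanly cut off the relevant region, the circumscribed-disk structure of an intersection of disks is less transparent, and proving that a minimal circumscribed disk centered on the edge $[p_{k-1},p_k]$ is exactly one ``pinched'' between $B(p_{k-1},r_{k-1})$ and $B(p_k,r_k)$ — the precise analogue of the argument around \eqref{tree_decomp1} — will require care, especially in $\mathbb H^2$ where, as in Figure \ref{k_2_2a}, the naive geodesic-distance monotonicity can fail and one must argue via Saccheri quadrilaterals or an equivalent device.
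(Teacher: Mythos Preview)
Your overall architecture---co-central set, its tree structure, sharpening to minimal covering disks via Kirszbraun, induction on $k$ with a leaf vertex $p_k$---matches the paper exactly, and your list of ``dual decomposition identities'' even contains the crucial one. The base case $k=2$ is also far easier than you fear: no segment-area computation or Saccheri device is needed. If $r_1\ge r_2$, reposition the contracted pair by an isometry so that $p_2'=p_2$ and $p_1'\in[p_1,p_2]$; then $D_1'\cap D_2'\supseteq D_1\cap D_2$ outright, and the area inequality follows by containment.

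The genuine gap is in your inductive assembly, where you correctly sense trouble (``wait'') but do not resolve it. With $V_X=\bigcap_{i<k}D_i$ and $V_Y=D_{k-1}\cap D_k$ you have $V=V_X\cap V_Y$ and $V'=V_X'\cap V_Y'$, and inclusion--exclusion reads
\[
\area(V')=\area(V_X')+\area(V_Y')-\area(V_X'\cup V_Y').
\]
Induction and the $k=2$ case bound the first two terms from below; what you still need is $\area(V_X'\cup V_Y')\le\area(V_X\cup V_Y)$, and trying instead to bound $\area(V_X'\cap V_Y')$ directly is circular. The missing move is to \emph{use} the identity you listed: on the original side the leaf structure forces $V_X\cup V_Y=D_{k-1}$ (equivalently $V_X\setminus D_k=D_{k-1}\setminus D_k$: because $p_k$ has degree one in the co-central tree, $\partial D_k\cap\partial U$ is a single arc whose endpoints also lie on $\partial D_{k-1}$, so removing $D_k$ from the intersection recovers exactly the lune $D_{k-1}\setminus D_k$). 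On the contracted side one has only the trivial containment $V_X'\cup V_Y'\subseteq D_{k-1}'$, but since $\area(D_{k-1}')=\area(D_{k-1})=\area(V_X\cup V_Y)$ this suffices and the induction closes. The paper phrases the same step subtractively, writing $\area(V)=\area(V_X)-\area(D_{k-1}\setminus D_k)$ and using $V_X'\setminus D_k'\subseteq D_{k-1}'\setminus D_k'$, but the content is identical.
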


We shall use the notations $D_i=B(p_i,r_i)$ and $D_i'=B(p_i',r_i)$ again, and we set $U=\bigcap_{i=1}^kD_i$ and $U'=\bigcap_{i=1}^kD_i'$.

The theorem is obvious if $U$ has no interior point, because in that case, the left hand side of the inequality is $0$. From now on, we shall assume that the interior of $U$ is not empty. 




The main tool of the proof will be a dual version of the central set of a compact subset of $M$, which we shall call the co-central set.

\begin{defin}
	Let $A\subset M$ be a compact subset. A closed disk $D\supseteq A$ is said to be a \emph{minimal covering disk of} $A$ if there is no closed disk $\widetilde D\subsetneq D$ such that $\widetilde D\supseteq A$.
\end{defin}

\begin{defin}
	The \emph{co-central set} $C_A^*$ of the compact subset $A\subset M$ is the set of the centers of the minimal covering disks of $A$.
\end{defin}

The following lemmata describe the co-central set $C_U^*$ of the intersection $U$ of the disks $D_i$.
\begin{lem}\label{lem:spindle}
The radius of any minimal covering disk of $U$ is at most $\rho=\max_{1\leq i\leq k}r_i$.
\end{lem}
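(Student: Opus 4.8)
The plan is to argue by contradiction: I suppose that $B(c,\varrho)$ is a minimal covering disk of $U$ with $\varrho>\rho$, and use the rigidity of minimal covering disks to produce a point of $U$ lying too far from all the centres $p_i$. First I record what minimality gives. Put $R_U(c)=\max_{x\in U}d(c,x)$. If $\varrho>R_U(c)$, then the concentric disk $B(c,R_U(c))$ is a strictly smaller covering disk, so $\varrho=R_U(c)$. Moreover, if the contact set $F:=U\cap\partial B(c,\varrho)$ (the set of points of $U$ farthest from $c$) were a single point $x^{\ast}$, then moving $c$ a short distance towards $x^{\ast}$ would produce a covering disk of radius $<\varrho$ internally tangent to $B(c,\varrho)$ at $x^{\ast}$, again contradicting minimality; hence $|F|\ge2$.

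Next I claim that, since $\varrho>\rho$, each $x\in F$ is a vertex of $\partial U$ (a concatenation of circular arcs). Indeed $x\in\partial U\subseteq\bigcup_i\partial D_i$, so $x\in\partial D_j$ for some $j$, whence $d(p_j,x)=r_j\le\rho<\varrho$. The circles $\partial D_j$ and $\partial B(c,\varrho)$ cannot be internally tangent at $x$ with $D_j$ on the inside, for then $D_j\subseteq B(c,\varrho)$ would be a strictly smaller covering disk of $U$; they cannot be externally tangent, for then $U\subseteq D_j\cap B(c,\varrho)=\{x\}$, against $\inter U\ne\emptyset$; and the remaining tangency is ruled out by $r_j<\varrho$. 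So $\partial D_j$ and $\partial B(c,\varrho)$ meet transversally at $x$. If $x$ were a smooth point of $\partial U$, then a neighbourhood of $x$ in $\partial U$ would be an arc of some $\partial D_{j'}$, which, crossing $\partial B(c,\varrho)$ transversally, would leave $B(c,\varrho)$, contradicting $U\subseteq B(c,\varrho)$. Hence $x$ is a vertex and $F$ is finite; near $x$ the boundary $\partial U$ runs along arcs of two of the disks, say $D_a$ and $D_b$, so $d(p_a,x)=r_a$ and $d(p_b,x)=r_b$.

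Now the tangent line of $\partial B(c,\varrho)$ at $x$ supports the subset $U$ at $x$, so the direction from $c$ to $x$ lies in the normal cone of $U$ at $x$, which is generated by the directions from $p_a$ to $x$ and from $p_b$ to $x$; equivalently, the geodesic ray from $x$ through $c$ lies in the angular sector at $x$ bounded by $[x,p_a)$ and $[x,p_b)$. That ray therefore crosses the geodesic segment $[p_a,p_b]$ at some point $w$; since $d(\cdot\,,x)$ is convex along $[p_a,p_b]$ in $\mathbb H^2$ and in $\mathbb E^2$, it follows that $d(w,x)\le\max\{r_a,r_b\}\le\rho<\varrho=d(c,x)$, so $w$ lies on $[x,c]$ and $\varrho=d(x,w)+d(w,c)\le\rho+d(w,c)$. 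It remains to force $w=c$, i.e.\ $c\in[p_a,p_b]$. This happens as soon as two contact points $x_1,x_2\in F$ lie on one common pair $\partial D_a,\partial D_b$ of circles: then $c,p_a,p_b$, being equidistant from $x_1$ and from $x_2$, all lie on the perpendicular bisector of $[x_1,x_2]$, the sector condition at $x_1$ then places $c$ between $p_a$ and $p_b$, and convexity of $d(\cdot\,,x_1)$ along $[p_a,p_b]$ yields $\varrho=d(c,x_1)\le\max\{r_a,r_b\}\le\rho$, a contradiction.

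For $k=2$ this is automatic, since $U=D_1\cap D_2$ is a lens whose only two vertices both lie on $\partial D_1\cap\partial D_2$ and hence both belong to $F$. The step I expect to be the main obstacle is the general case: one must arrange that two of the finitely many vertices of $\partial U$ lying in $F$ are carried by a common pair of circles — equivalently, cut the configuration down to the two disks that actually pin $B(c,\varrho)$ — and organising this reduction (possibly by passing to $\conv F\subseteq U$, or by induction on the number of disks) is the delicate part of the argument.
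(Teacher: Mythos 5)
Your argument is sound up to the point you yourself flag, but the flagged step is a genuine gap rather than a routine loose end. Everything through the inequality $\varrho\le\rho+d(w,c)$ checks out, and your closing mechanism does work whenever two contact points $x_1,x_2\in F$ lie on a common pair of circles $\partial D_a,\partial D_b$. The trouble is that this configuration need not exist. Take three unit disks centred at the vertices of an equilateral triangle of side $1$ in $\mathbb E^2$: then $U$ is the Reuleaux triangle with vertices $p_1,p_2,p_3$, the smallest enclosing disk (in particular a minimal covering disk) touches $U$ exactly at these three vertices, and each vertex lies on a \emph{different} pair of boundary circles. So ``cutting the configuration down to the two disks that pin $B(c,\varrho)$'' is impossible in general, and neither passing to $\conv(F)$ nor an induction on $k$ obviously manufactures the missing pair. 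What your local analysis at a single contact point actually yields is only $\varrho\le\rho+d(w,c)$ with $w\in[p_a,p_b]$, which is no contradiction unless $w=c$; some genuinely global input is required to finish.

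The paper supplies that global input with a two-line observation that also makes the rest of your case analysis unnecessary: each $D_i$, having radius at most $\rho$, is $\rho$-hyperconvex (spindle convex with radius $\rho$), hence so is the intersection $U$. Consequently, for any two points $p,q$ of $U\cap\partial D$ (and at least two exist, as you showed), the whole $\rho$-spindle of $p$ and $q$ is contained in $U\subseteq D$. If $D$ had radius $\varrho>\rho$, the boundary arcs of that spindle — arcs of circles of radius $\rho$ through $p$ and $q$ — would be more strongly curved than $\partial D$ and would therefore leave $D$ near $p$, a contradiction. This uses only that $p$ and $q$ lie in some $D_i$ of radius at most $\rho$, requires no classification of contact points as vertices and no normal-cone computation, and is precisely the kind of step you would need to import to complete your proof.
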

\begin{proof}
	For two points $p,q\in M$ with $d(p,q)\leq 2\rho$, we define the \emph{$\rho$-spindle} of $p$ and $q$ as the intersection of all those disks of radius $\rho$ which contain both $p$ and $q$.
	Recall that a subset $A\subseteq M$ of diameter at most $2\rho$ is called \emph{$\rho$-hyperconvex} (or \emph{spindle convex with radius $\rho$}) if for any two points $p,q\in A$, $A$ contains the $\rho$-spindle of $p$ and $q$.
	
	As the disks $D_i$ are all $\rho$-hyperconvex, so is their intersection $U$. The boundary circle of any minimal covering disk $D$ of $U$ intersects $U$ in at least two points $p$ and $q$, and by the $\rho$-hyperconvexity of $U$, $D$ contains also the $\rho$-spindle of $p$ and $q$. This implies that the radius of $D$ is at most $\rho$.  
\end{proof}

The main consequence of the lemma is that $C_U^*$ is compact. In general, $C_A^*$ is not compact  for any compact subset $A$ of $M$. For example, the co-central sets of convex polygons in $M$ are unbounded.

Denote by $\mathfrak D^H$, $\mathfrak D^E$, $\mathfrak D^S$, and $\mathfrak D^M$ the set of all closed disks with non-empty interior and exterior in $\mathbb H^2$, $\mathbb E^2$, $\mathbb S^2$, and $M$, respectively. The condition on the interior excludes disks of radius $0$, the condition on the exterior is relevant only on the sphere and excludes the whole sphere $\mathbb S^2$ from the space of closed disks. These sets are equipped with a natural topology. Modeling the hyperbolic plane by Poincar\'e's conformal disk model, we get a conformal embedding $\mathbb H^2\hookrightarrow \mathbb E^2$. Similarly, a stereographic projection gives a conformal embedding  $\mathbb E^2\hookrightarrow \mathbb S^2$. As under these embeddings, disks in the domain space are mapped onto disks in the codomain space, these embeddings induce embeddings of the spaces of disks $\mathfrak D^H\hookrightarrow \mathfrak D^E\hookrightarrow \mathfrak D^S$. Fixing these embeddings, we shall assume that $\mathbb H^2\subset \mathbb E^2\subset \mathbb S^2$ and $\mathfrak D^H\subset\mathfrak D^E\subset \mathfrak D^S$.
	
\begin{lem}\label{lem:spindle2}
	If $U=\bigcap_{i=1}^kD_i$ is the intersection of the disks $D_i\in \mathfrak D^M$, and $U$ has non-empty interior, then every minimal covering disk $D\in \mathfrak D^S$ of $U$ belongs to $\mathfrak D^M$.
\end{lem}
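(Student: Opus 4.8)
The plan is to argue by contradiction, exploiting the nested embeddings $\mathfrak D^H\subset\mathfrak D^E\subset\mathfrak D^S$ together with Lemma \ref{lem:spindle} to control the size of minimal covering disks. Suppose $D\in\mathfrak D^S$ is a minimal covering disk of $U$ that does not belong to $\mathfrak D^M$. Since $U$ lies in $M$ and already sits inside some genuine $M$-disk (e.g.\ any $D_i$), the case $M=\mathbb S^2$ is vacuous, so $M\in\{\mathbb H^2,\mathbb E^2\}$ and $D\notin\mathfrak D^M$ means the hyperbolic (resp.\ Euclidean) ``circle'' bounding $D$ is, in the conformal model, a Euclidean circle that either crosses the ideal boundary of $\mathbb H^2$ (resp.\ is so large that it is not an honest Euclidean circle after the stereographic identification), or more precisely $D$ is a disk of the ambient $\mathbb S^2$ (or $\mathbb E^2$ for the case $M=\mathbb H^2$) whose ``center'' is not a point of $M$. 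The key point I would establish is that such a $D$ can always be shrunk: I will produce a strictly smaller disk $\widetilde D\subsetneq D$, still in $\mathfrak D^S$, that still covers $U$, contradicting minimality.

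First I would invoke Lemma \ref{lem:spindle}: the radius of $D$ (measured spherically, which makes sense since $D\in\mathfrak D^S$ and we have fixed the chain of conformal embeddings) is at most $\rho=\max_i r_i<\infty$. In particular $D$ is a spherical cap that is a proper subset of $\mathbb S^2$ with nonempty interior, and its spherical center $c$ is a well-defined point of $\mathbb S^2$. The only way $D\notin\mathfrak D^M$ can happen is that $c\notin M$ (where $M$ is viewed as an open subset of $\mathbb S^2$ via the fixed embeddings): if $c\in M$, then the disk of the same spherical radius is a genuine $M$-disk. So the remaining task is: assuming $c\notin M$ but $U\subseteq D$ with $U\subset M$ having nonempty interior, derive a contradiction with minimality. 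I would do this by moving $c$ slightly toward $M$. Concretely, pick any interior point $x_0$ of $U$; then $x_0$ is an interior point of the cap $D$, so $x_0$ lies at spherical distance strictly less than the radius of $D$ from $c$. Now translate $c$ along the spherical geodesic from $c$ toward $x_0$ (equivalently toward $M$, since $x_0\in M$) by a tiny amount $\varepsilon>0$, keeping the radius fixed, to get a new cap $D_\varepsilon$. For $\varepsilon$ small, $U\subseteq D_\varepsilon$ still holds because $U$ is compact and contained in the open interior of $D$ except possibly for boundary touching points, and moving the center toward a deep interior point of $U$ only helps — this is the one place a short continuity/compactness estimate is needed. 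Having slid the center into $M$ while still covering $U$, we are back in $\mathfrak D^M$, and then the existence of a minimal covering disk within $\mathfrak D^M$ (which has radius $\le\rho$ by Lemma \ref{lem:spindle}) would show that $D$ was not minimal among $\mathfrak D^S$-disks either, contradiction.

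The step I expect to be the main obstacle is making precise the claim ``moving the spherical center toward an interior point of $U$ while keeping the radius fixed keeps $U$ inside the cap.'' This is intuitively clear and is a standard convexity-of-caps fact, but it has to be verified uniformly over all of $\partial D\cap U$ (the touching set), and one must be careful that in the cases $M=\mathbb H^2$ or $M=\mathbb E^2$ the relevant ``motion'' of the center is genuinely a motion through $M$, i.e.\ that the spherical geodesic from $c$ to $x_0$ enters $M$ immediately (which holds because $x_0$ is an \emph{interior} point of $U\subset M$ and $M$ is open in $\mathbb S^2$). Once this monotonicity lemma for caps is in hand — or, alternatively, once one argues directly that the spherical circumscribed disk of $U$ is itself a cap of radius $\le\rho$ whose center must lie in the convex hull of $U$ and hence in $M$ — the lemma follows. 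An even cleaner route, which I would try first, is: let $D$ be \emph{any} minimal covering disk in $\mathfrak D^S$; by Lemma \ref{lem:spindle2}'s proof of Lemma \ref{lem:spindle}, $\partial D$ meets $U$ in at least two points and $D$ contains the spherical $\rho$-spindle of those two points; since those two points lie in $M$ and the $\rho$-spindle of two points of $M$ (with $\rho$ the common radius of the $D_i$) is itself contained in an $M$-disk of radius $\rho$, the center of $D$ is forced into $M$. I would present whichever of these is shortest to write rigorously.
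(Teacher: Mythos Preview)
Your main approach has two genuine gaps.

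First, the characterization ``$D\notin\mathfrak D^M$ iff the spherical center $c\notin M$'' is false. Under the fixed conformal embeddings one has $D\in\mathfrak D^M$ precisely when $D\subseteq M$ as subsets of $\mathbb S^2$ (a spherical cap projects stereographically to a Euclidean disk exactly when it avoids the pole; a Euclidean disk is a hyperbolic disk exactly when it lies inside the Poincar\'e disk). A cap can have its spherical center in $M$ and still spill across $\partial M$, so locating $c$ does not decide membership in $\mathfrak D^M$.

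Second, and more seriously, translating the center toward $x_0$ while keeping the radius fixed yields a cap $D_\varepsilon$ of the \emph{same} radius as $D$; two caps of equal radius with distinct centers are never nested, so $D_\varepsilon\not\subsetneq D$. Minimality here is with respect to set inclusion, not radius, and minimal covering disks are far from unique (the whole co-central set parameterizes them), so exhibiting some other covering disk --- even one in $\mathfrak D^M$ of radius $\le\rho$ --- does not contradict the minimality of $D$. Worse, $D_\varepsilon$ need not cover $U$ at all: since $D$ is minimal, $\partial D$ touches $U$ in at least two points, and sliding the center toward $x_0$ will in general push one of those touching points outside the cap. Your appeal to Lemma~\ref{lem:spindle} is also circular at this stage: that lemma bounds the $M$-radius of minimal covering $M$-disks, and says nothing about a spherical cap not yet known to lie in $\mathfrak D^M$.

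Your alternative route at the end is in spirit the correct one and is essentially what the paper does, but the decisive step (``the center of $D$ is forced into $M$'') is asserted rather than proved, and as noted above the location of the center is not the relevant criterion. The paper's argument runs as follows: if $D\not\subseteq M$, then $\partial D\cap M$ is a curve of constant geodesic curvature in $M$ strictly smaller than the curvature of any $M$-circle --- a straight line when $M=\mathbb E^2$, and a line, horocycle, or equidistant curve when $M=\mathbb H^2$. Now $\partial D$ passes through two points $p,q\in U$, and by $\rho$-hyperconvexity $U$ (hence $D$) contains the $\rho$-spindle of $p$ and $q$, whose boundary arcs are arcs of $M$-circles of radius $\rho$. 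A curve through $p$ and $q$ with strictly smaller curvature must cut into the interior of that spindle, so $D$ would fail to contain it, a contradiction. This curvature comparison is the missing ingredient in your sketch.
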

\begin{proof}
Let $\partial D$ be the boundary circle of $D$. As $D$ is a minimal covering disk of $U$, $\partial D$ must intersect $U$ in at least two points, say at $p$ and $q$. If  $D$ is not contained in $M$, then $\partial D\cap M$ is line of constant geodesic curvature in $M$, and the geodesic curvature of the line is strictly less than the geodesic curvature of any circle in $M$. Actually, $\partial D\cap M$ must be a straight line if $M=\mathbb E^2$, and it is either a straight line, or a   horocycle, or an equidistant curve of a straight line if $M=\mathbb H^2$. However, any such line through $p$ and $q$ crosses the interior of the $\rho$-spindle of $p$ and $q$, which contradicts the assumption that $D$ covers $U$.
\end{proof}
\begin{lem}\label{lemma:cotree} Let $v_1,\dots, v_l$ be the centers of those minimal disks covering $U$, the boundary circles of which have at least $3$ points in common with $U$. Then the co-central set $C_U^*$ is a tree, the edges of which are geodesic segments connecting certain pairs of the centers $v_1,\dots, v_l$. If the boundary circle of the minimal covering disk centered at $v_j$ meets $U$ at $\nu<\infty$ points, then the degree of the vertex $v_j$ in the tree $C_U^*$ is at least $\nu$.
\end{lem}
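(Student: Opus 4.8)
The plan is to develop for $U=\bigcap_{i=1}^{k}D_i$ a theory dual to that of the central set used in Lemma~\ref{lemma:tree}. For $c\in M$ I set $R(c)=\max_{x\in U}d(c,x)$, so that $D_c:=B(c,R(c))$ is the smallest disk centred at $c$ containing $U$, and write $F(c):=\partial D_c\cap U$; since each $d(\cdot,x)$ is geodesically convex on $M\in\{\mathbb H^2,\mathbb E^2\}$, the function $R$ is geodesically convex. The first step is to show that $D$ is a minimal covering disk of $U$ exactly when $D=D_c$ with $|F(c)|\ge 2$. Indeed, if $|F(c)|\ge 2$ then no disk $\widetilde D\subsetneq D_c$ can contain $U$, because $\partial\widetilde D$ lies in the closed disk $D_c$ and so meets $\partial D_c$ in at most one point, whereas it would have to pass through the (at least two) points of $F(c)\subseteq U$ lying on $\partial D_c$; conversely, if $F(c)=\{x_0\}$ is a single point, then moving $c$ towards $x_0$ decreases $R$ (at unit rate as long as $x_0$ remains the unique farthest point of $U$) until one reaches a point $c^*$ with $|F(c^*)|\ge 2$, which happens strictly before $x_0$ is reached since $R(x_0)>0$, and then $D_{c'}\subsetneq D_c$ still contains $U$ for $c'\in(c,c^*)$. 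Together with Lemmata~\ref{lem:spindle} and \ref{lem:spindle2} this gives $C_U^*=\{c\in M:|F(c)|\ge 2\}$, with each such $D_c$ a genuine disk of $M$ and $R\le\rho$ on $C_U^*$.

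The key geometric step will be to show that, away from the centres $p_1,\dots,p_k$, the contact set $F(c)$ consists only of vertices of $U$, i.e.\ of the finitely many corner points where two of the circular arcs forming $\partial U$ meet (each such point lying on at least two of the $\partial D_i$). Suppose some $z\in F(c)$ lay in the relative interior of an arc of $\partial D_i$. Then $z\in\partial D_c\cap\partial D_i$, and since the arc of $\partial D_i$ through $z$ is part of $\partial U\subseteq U\subseteq D_c$ while $U$ locally fills the side of that arc on which $p_i$ lies, the circle $\partial D_c$ can neither cross $\partial D_i$ transversally at $z$ nor be externally tangent to it there, and one cannot have $D_c\subseteq D_i$ (otherwise the arc through $z$ would leave $D_c$). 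Hence $\partial D_c$ is internally tangent to $\partial D_i$ at $z$ with $D_i\subseteq D_c$, so $\partial D_c\cap D_i=\{z\}$; as $F(c)\subseteq U\subseteq D_i$ this forces $F(c)=\{z\}$, contradicting $|F(c)|\ge 2$, unless $D_c=D_i$, i.e.\ $c=p_i$. So away from the $p_i$ all contacts are vertices, while at a centre $p_i$ with $R(p_i)=r_i$ the set $F(p_i)$ is an arc of $\partial D_i$. In particular the $v_j$ of finite degree in the statement are exactly the $c\in C_U^*$ with $3\le|F(c)|<\infty$, and the centres $p_i$ for which $F(p_i)$ is an arc are the $v_j$ with $\nu=\infty$.

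From this I would read off the combinatorics of $C_U^*$. On $\{c:|F(c)|=2\}$, continuity of $F$ together with discreteness of the vertex set forces the unordered pair $F(c)=\{z_a,z_b\}$ to be locally constant, so each connected component of this locus lies on the perpendicular bisector of a fixed pair of vertices of $U$ --- a geodesic. Such a component cannot be unbounded, since along a geodesic bisector $R(c)=d(c,z_a)\to\infty$ while $R\le\rho$ on $C_U^*$; and it can terminate only where some third point joins $F$, which by the previous paragraph is either a vertex (so the endpoint is a generic $v_j$) or a smooth point, in which case the endpoint is a centre $p_i$ with $F(p_i)$ an arc and $\nu=\infty$; either way the endpoint lies in $\{v_1,\dots,v_l\}$. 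Hence $C_U^*$ is a locally finite graph whose edges are geodesic segments joining the $v_j$, and by Lemma~\ref{lem:spindle} it is compact, hence finite. To see it is a tree it suffices to prove it is contractible, and for that I would use that the sub-level set $K=\{c:R(c)\le\rho\}$ is a non-empty, compact, geodesically convex --- hence contractible --- subset of $M$ containing $C_U^*$, and that the flow of the first paragraph (moving each $c\in K\setminus C_U^*$ along the geodesic towards its unique farthest point, rescaled so that every point arrives on $C_U^*$ at time $1$) is a deformation retraction of $K$ onto $C_U^*$, exactly as in \cite{Gorbovickis}. A contractible finite graph is a tree.

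Finally, for the degree bound, fix $v_j$ with $\nu:=|F(v_j)|<\infty$ and write $F(v_j)=\{z_1,\dots,z_\nu\}$. For $c$ in a small punctured neighbourhood of $v_j$ with $c\notin C_U^*$ one has $|F(c)|=1$, and by upper semicontinuity of $F$ and discreteness of the vertices $F(c)=\{z_m\}$ for some $m$ (the finitely many $p_i$ that may lie near $v_j$ cannot affect connectivity), so the punctured neighbourhood minus $C_U^*$ is, up to finitely many points, the disjoint union of the open sets $\Sigma_m=\{c:F(c)=\{z_m\}\}$. Each $\Sigma_m$ is non-empty: displacing $c$ from $v_j$ along the geodesic pointing away from $z_m$ increases $d(c,z_m)$ at unit rate while changing $d(c,z_{m'})$, for $m'\ne m$, at rate $\cos\angle z_mv_jz_{m'}<1$, so $z_m$ becomes the unique farthest point of $U$. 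Hence the punctured neighbourhood minus $C_U^*$ has at least $\nu$ connected components; but near the tree-vertex $v_j$ the set $C_U^*$ is a union of $\deg(v_j)$ geodesic arcs emanating from $v_j$, which cut a small punctured disk into exactly $\deg(v_j)$ components, so $\deg(v_j)\ge\nu$. I expect the main obstacle to be the geometric step of the second paragraph --- establishing precisely how a minimal covering disk can meet $\partial U$, namely only at vertices away from the $p_i$ --- together with a careful check that the flow of the third paragraph is a genuine continuous deformation retraction onto $C_U^*$; once these are secured, the geodesicity of the edges and the sector count giving the degree bound are routine.
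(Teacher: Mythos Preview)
Your argument is essentially correct, but it diverges from the paper's proof in two substantive places. For the tree property, the paper does \emph{not} build a deformation retraction onto $C_U^*$ directly; instead it embeds $M$ into $\mathbb S^2$, applies the involution $\iota\colon D\mapsto\overline{\mathbb S^2\setminus D}$ to turn minimal covering disks of $U$ into maximal inscribed disks of $\bigcup_i\iota(D_i)$, and then uses a stereographic projection from a point of $\inter\,U$ to identify $C_U^*$ homeomorphically with the ordinary central set of a union of Euclidean disks, which is a tree by Gorbovickis's results. Your route---retracting the convex sublevel set $\{R\le\rho\}$ onto $C_U^*$ along the ``move towards the unique farthest point'' flow---is more self-contained and avoids the spherical detour, but you then owe a careful check that the hitting time and landing map are continuous, work the paper outsources entirely to \cite{Gorbovickis}. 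For the degree bound, the paper argues constructively: writing $F(v_j)=\{q_{i_1},\dots,q_{i_\nu}\}$ in cyclic order along $\partial D_{v_j}$, it shows that for each \emph{consecutive} pair the locus $e_{i_mi_{m+1}}$ is a nondegenerate segment emanating from $v_j$, yielding $\nu$ distinct edges. Your sector-counting argument is a valid alternative, but note a small gap: your second paragraph proves ``$F(c)$ consists of vertices'' only under the hypothesis $|F(c)|\ge2$, i.e.\ for $c\in C_U^*$, whereas in the fourth paragraph you invoke it for $c\notin C_U^*$ near $v_j$. You need an extra line there---e.g.\ since $F(v_j)$ is finite, no interior arc-point realizes $R(v_j)$, and by continuity the same holds for $c$ near $v_j$---to conclude $F(c)\subseteq\{z_1,\dots,z_\nu\}$.
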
 
\begin{figure}[b]
	\centering{\includegraphics[height=4cm]{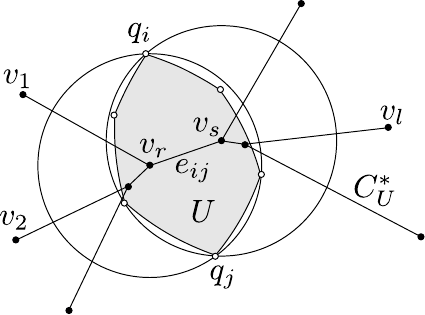}}
	\caption{Structure of the co-central set of $U$.\label{cocentral_a} }
\end{figure}
\begin{proof} If the boundary $\partial U$ of $U$ is a circle, then $C_U^*$ is a single point, so the lemma holds. If $\partial U$ is not a circle, then it is a convex curvilinear polygon, the sides of which are circle arcs meeting at some vertices $q_1,\dots,q_m$.  Then the boundary circle of every minimal covering disk of $U$ passes through at least two vertices of $\partial U$.  The set $e_{ij}$ of the centers of those minimal covering disks whose boundary circles go through the vertices $q_i\neq q_j$ is a compact convex subset of the orthogonal bisector of the segment $[q_i,q_j]$, hence it is either empty, or it is the convex hull of two of the centers $v_1,\dots,v_l$, see Figure \ref{cocentral_a}. If $e_{ij}$ is a segment, and $c$ is a point in its relative interior, then the boundary of the minimal covering disk of $U$ centered at $c$ meets $U$ only at the points $q_i$ and $q_j$. This implies that two segments of the form $e_{ij}$ and $e_{i'j'}$ corresponding to two different pairs of vertices $\{q_i,q_j\}\neq \{q_{i'},q_{j'}\}$ can intersect one another only at the endpoints of the segments. This proves that $C_U^*$ is a graph on the vertices $v_1,\dots, v_l$.

If the boundary circle of the minimal covering disk centered at $v_j$ meets $U$ at the points $q_{i_1},\dots,q_{i_{\nu}}$, listed in their cyclic order along 
the circle, then the sets $e_{i_1i_2},\dots,e_{i_{\nu-1}i_{\nu}},e_{i_{\nu}i_1}$ corresponding to the consecutive pairs of vertices are segments starting at    $v_j$, hence the degree of $v_j$ in the graph $C_U^*$ is at least $\nu$.

It remains to show that the graph $C_U^*$ is a tree. For this purpose, think of $M$ as a subset of $\mathbb S^2$ and $\mathfrak D^M$ as a subset of $\mathfrak D^S$, as explained before Lemma \ref{lem:spindle2}.
Denote by $\mathfrak C_U^*$ the set of all minimal covering disks of $U$ in $\mathfrak D^S$. Then, by Lemma \ref{lem:spindle2},  $\mathfrak C_U^* \subset\mathfrak D^M$ as well. Assigning to every point $p\in C_U^*$ the unique minimal covering disk of $U$ centered at $p$, we obtain a homeomorphism between the co-central set $C_U^*$ and the family of disks $\mathfrak C_U^*$. 

There is an involution $\iota\colon\mathfrak D^S\to \mathfrak D^S$ which maps  a spherical disk $D$ to the closure $\overline{\mathbb S^2\setminus D}$ of its complement. The subset $U$ of $M$ is a compact convex set with non-empty interior, therefore, it is a regular closed subset, which means that $U$ is equal to the closure of its interior. This implies that $\mathbb S^2\setminus U=\inter\, \overline{\mathbb S^2\setminus U}$. Applying this consequence of regularity and De Morgan's laws, we obtain that  $\iota(\mathfrak C_U^*)$ consists of maximal spherical disks lying in the union $\bigcup_{i=1}^k \iota(D_i)$. Choose a point $w\in\inter\, U$, and let $\psi\colon \mathbb S^2\setminus \{w\}\to\mathbb E^2$ be the stereographic projection from the pole $w$. This projection maps the disks $\iota(D_i)$ onto closed Euclidean disks. Assigning to a disk $D$ in $\iota(\mathfrak{C}_U^*)$ the Euclidean center of the disk $\psi(\iota(D))$, we obtain a homeomorphism between $\iota(\mathfrak{C}_U^*)$ and the central set of the union $\bigcup_{i=1}^k\psi(\iota(D_i))$. Hence, the co-central set $C_U^*$ is homeomorphic to the central set of $\bigcup_{i=1}^k\psi(\iota(D_i))$. Since $\bigcup_{i=1}^k\iota(D_i)$ is the complement of the interior of $U$ in $\mathbb S^2$, and $\inter\, U$ is contractible, results of I.~Gorbovickis \cite{Gorbovickis} yield that the central set of the union $\bigcup_{i=1}^k\psi(\iota(D_i))$ is homeomorphic to a tree graph.
\end{proof}

\begin{proof}[Proof of Theorem \ref{thm:area_int}]
	
	First we make modifications to the initial system of disks, analogous to those made in the proof of Theorem \ref{thm:perconv}.
	
	By Theorem \ref{Kirszbraun}, there exists a contraction $f\colon M\to M$ such that $f(p_i)=p_i'$ for $i=1,\dots,k$.
	
	 It is enough to prove the case when all the disks $D_i$ are minimal covering disks of $U$. Indeed, assuming that this special case is true, we can prove the general case as follows. Take for every disk  $D_i$ a minimal covering disk $B(\tilde p_i,\tilde r_i)$ of $U$ which is contained in  $D_i$. Then we have 
	\[
	U=\bigcap_{i=1}^kD_i=\bigcap_{i=1}^kB(\tilde p_i,\tilde r_i),
	\]
	and $r_i-\tilde r_i\geq d(\tilde p_i,p_i)\geq d(f(\tilde p_i),f(p_i))$, therefore $B(f(\tilde p_i),\tilde r_i)\subseteq D_i'$. Thus, applying the special case of the theorem for the minimal covering disks $B(\tilde p_1,\tilde r_1),\dots,B(\tilde p_k,\tilde r_k)$ of $U$, and the contraction of their centers by $f$, we obtain
	\begin{equation*}\area\left(\bigcap_{i=1}^kD_i\right)=\area\left(\bigcap_{i=1}^kB(\tilde p_i,\tilde r_i)\right)\leq \area\left(\bigcap_{i=1}^kB(f(\tilde p_i),\tilde r_i)\right)\leq \area\left(\bigcap_{i=1}^kD_i'\right).
	\end{equation*}
	
	By Lemma \ref{lemma:cotree}, the co-central set of $U$ is a tree with vertices $v_1,\dots,v_l$. We may assume that each minimal covering disk $B(v_i,\rho_i)\supseteq U$ centered at a vertex $v_i$ of $C_U^*$ belongs to the initial system  $\{D_j:1\leq j\leq k\}$, because adding these disks to the initial system, and the disks $B(f(v_i),\rho_i)$ to the contracted system, inequality \eqref{ineq:intersection} becomes sharper.
	
	We shall say that the initial system of disks is \emph{sharpened} if the disks $D_i$ are minimal covering disks of $U$, and $\{v_1,\dots,v_l\}\subseteq \{p_1,\dots,p_k\}$. From now on, we shall assume that the initial system of disks is sharpened, and we prove the theorem under this assumption by induction on the number $k$ of the disks. 
	
	The base case $k=1$ is obvious. The  case $k=2$ can also be proved easily, since if $r_1\geq r_2$, and we reposition the disks $D_1'$, $D_2'$ by an isometry in such a way that $p_2'$ goes to $p_2$ and $p_1'$ is moved to a point on the segment $[p_1,p_2]$, then the image of $D_1'\cap D_2'$ will cover $D_1\cap D_2$.  
	
	Assume that the statement is true for any sharpened system of $k-1$ disks and its contraction, and consider a sharpened system of $k$ disks. The set $C_U^*$ is a tree on the vertices $v_1,\dots, v_l$, and the points $p_i$ are located in $C_U^*$. Subdividing each edge of $C_U^*$ by the points $p_i$ lying in the relative interior of that edge, we obtain a tree on the vertices $p_1,\dots,p_k$. This tree has a degree one vertex. We may assume without loss of generality, that $p_k$ has degree one, and its neighbor is $p_{k-1}$, see Figure \ref{cocentral_b}.

\begin{figure}[h]
	\centering{\includegraphics[height=4cm]{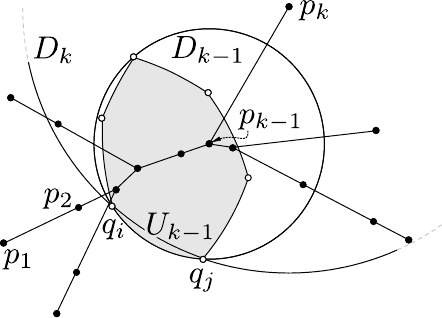}}
	\caption{The inductive step.\label{cocentral_b} }
\end{figure}
	
By Lemma \ref{lemma:cotree}, the boundary circle of $D_k$	 must intersect the boundary of $U$ in an arc, and if the endpoints of the arc are $q_i$ and $q_j$, then $p_{k-1}$ is a point of the segment $e_{ij}$ and the boundary circle of $D_{k-1}$ also passes through $q_i$ and $q_j$. From this picture, we see that the intersection $U_{k-1}=\bigcap_{i=1}^{k-1}D_i$ of the first $k-1$ disks is the disjoint union of $U$ and $D_{k-1}\setminus D_k=U_{k-1}\setminus D_k$. The co-central set of $U_{k-1}$ is just the tree $C_U^*$ from which the vertex $p_k$ is removed together with the edge $[p_{k-1},p_k]$, in particular, the system of the disks $D_1,\dots,D_{k-1}$ is also sharpened.

Denoting the intersection of $D_1',\dots,D_{k-1}'$ by $U_{k-1}'$, the induction hypothesis gives
\[
\area(U_{k-1})\leq \area(U_{k-1}').
\]
Case $k=2$ of the theorem yields
\begin{align*}
\area(D_{k-1}\setminus D_k)&=\area(D_{k-1})-\area(D_{k-1}\cap D_k)\\&\geq \area(D_{k-1}')-\area(D_{k-1}'\cap D_k') =\area(D_{k-1}'\setminus D_k').
\end{align*}
These two inequalities provide
\begin{align*}
\area(U)=\area(U_{k-1})-\area(D_{k-1}\setminus D_k)&\leq \area(U_{k-1}')-\area(D_{k-1}'\setminus D_k')\\&\leq \area(U_{k-1}')-\area(U_{k-1}'\setminus D_k')=\area(U'),
\end{align*}
as claimed.
\end{proof}

\bibliographystyle{acm}
\bibliography{KP}
\end{document}